\newtheorem{thm}{Theorem}[section]
\newtheorem{lem}[thm]{Lemma}
\theoremstyle{definition}
\newtheorem{defi}[thm]{Definition}
\theoremstyle{remark}
\newtheorem{rmk}[thm]{\bf Remark}
\newtheorem{exm}[thm]{\bf Example}
\numberwithin{equation}{section}
\numberwithin{figure}{section}
\def\C{\mathbb{C}}
\def \d{\delta}
\def \dim{\text{dim}}
\def \dw{\text{down}}
\def \im{\text{im~}}
\def \K{\mathcal{K}}
\def \L{\mathcal{L}}
\def\la{\lambda}
\def \omg{\omega}
\def \p{\partial}
\def \R{\mathbb{R}}
\def \spec{\text{~Spec}}
\def \up{\text{up}}
\DeclareMathOperator{\sgn}{sgn}
\begin{document}
\title[Spectra of Laplace operators]{The spectra of Laplace operators on covering simplicial complexes}

\author[Y.-Z. Fan]{Yi-Zheng Fan*}
\address{Center for Pure Mathematics, School of Mathematical Sciences, Anhui University, Hefei 230601, P. R. China}
\email{fanyz@ahu.edu.cn}
\thanks{*The corresponding author. Supported by National Natural Science Foundation of China (Grant No. 12331012).}

\author[Y.-M. Song]{Yi-Min Song}
\address{School of Mathematics and Physics, Anhui Jianzhu University, Hefei 230601, P. R. China}
\email{songym@ahjzu.edu.cn}

\author[Y. Wang]{Yi Wang$^\sharp$}
\address{Center for Pure Mathematics, School of Mathematical Sciences, Anhui University, Hefei 230601, P. R. China}
\email{wangy@ahu.edu.cn}
\thanks{$^\sharp$Supported by National Natural Science Foundation of China (Grant No. 12171002).}

\subjclass[2020]{05E45, 05C65, 55U05}

\keywords{Simplicial complex; incidence-signed complex; covering; Laplace operator; incidence graph; representation}

\begin{abstract}
We give a decomposition of the Laplace operator (in matrix form) of a covering simplicial complex as a direct sum of several matrices, one of which is the Laplace operator of the base complex. It follows that the spectrum of a covering simplicial complex is a multiset union of the spectrum of the base simplicial complex and the spectra of other relevant matrices, which implies the spectral inclusion property of Horak and Jost.
In the case of a $2$-fold covering, we show that the spectrum is a multiset union of the spectrum of the base complex and that of an incidence-signed simplicial complex, thereby generalizing a result of Bilu and Linial from graphs to simplicial complexes. Additionally, we show that the dimension of the cohomology of a covering complex is greater than or equal to that of the base complex. Our arguments exploit the coverings of incidence graphs of simplicial complexes and the representation theory of permutation groups.
\end{abstract}

\maketitle

\maketitle

\section{Introduction}
The study of graph Laplacians has a long and prolific history.
It first emerged in a paper by Kirchhoff \cite{Kirch}, where he analyzed electrical networks and formulated the celebrated matrix tree theorem.
In the early 1970s, Fiedler \cite{Fied} established a relationship between the second smallest eigenvalue and the connectivity of a graph.
Since then, numerous papers have been published on the Laplacian spectra of graphs (see \cite{Merris}).
The normalized graph Laplacian was introduced by Bottema \cite{Bot}, who investigated a transition probability operator on graphs; for further details, refer to \cite{Chung96}.

The graph Laplacian was generalized to simplicial complexes by Eckmann \cite{Eckmann}, who established the discrete version of the Hodge theorem.
This theorem can be formulated as
$$ \text{ker}(\d_i^* \d_i+ \d_{i-1} \d_{i-1}^*) = \tilde{H}^i(K,\mathbb{R}),$$
where $L_i=  \d_i^* \d_i+ \d_{i-1} \d_{i-1}^*$ is the higher order combinatorial Laplacian.
 Subsequent efforts toward the normalization of the combinatorial Laplace operator have been made by Chung \cite{Chung93}, Taszus \cite{Tas}, Lu and
Peng \cite{LP} and Garland \cite{Garl}.
Horak and Jost further advanced this field by developing a general framework for Laplace operators defined in terms of the combinatorial structure of a  simplicial complex, including the combinatorial Laplacian and the normalized Laplacian.

The graph covering (also known as lift) has been  extensively  introduced and studied in numerous  literatures \cite{Mass,Rot,Big} from the viewpoint of topology or simplicial complex.
It was later generalized to hypergraph covering in various forms \cite{Dor,LH1,Song}.
Rotman \cite{Rot} introduced the notion of simplicial complex covering, which Gustavson \cite{Gust} employed to investigate the Laplacian spectrum.
As noted by Horak and Jost \cite{HJ13A}, there exist counterexamples to the Universal Lifting Theorem for discrete covering maps \cite[Theorem 2.1]{Rot} and the spectral inclusion theorem \cite[Theorem 4.4]{Gust}.
To address these issues, Horak and Jost \cite{HJ13A} introduced the concept of strong covering; see Section \ref{Secov} for details.

By the representation theory of permutation groups, the characteristic polynomial of the adjacency matrix of a covering graph was derived as a product of the characteristic polynomials of certain matrices \cite{Mizu, Feng}, which implies that the spectrum of the adjacency matrix (simply called the adjacency spectrum) of a covering graph contains that of its underlying graph.
Li and Hou \cite{LH2} extened this approach to formulate the characteristic polynomials of the Laplacian matrix or normalized Laplacian matrix of a covering graph.
Specifically, for $2$-fold covering (or $2$-lift) of a graph $G$, Bilu and Linial \cite{Bilu} established the following spectral union property using eigenvector approach.
It is worth noting that this property can also be demonstrated via the sign representation of the symmetric group $\mathbb{S}_2$.

\begin{thm}\label{G-2lift} \cite{Bilu}
Let $\bar{G}$ be a $2$-lift of a graph $G$.
Then the adjacency spectrum of $\bar{G}$ is a multiset union of adjacency spectrum of $G$ and the adjacency spectrum of a signed graph $G_s$ with $G$ as the underlying graph.
\end{thm}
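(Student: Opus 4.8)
The plan is to put the adjacency matrix of $\bar G$ into a block form adapted to the two fibres of the covering map, and then diagonalize the blocks using the two irreducible characters of $\mathbb{S}_2\cong\mathbb{Z}_2$ --- the trivial one, which produces $A(G)$, and the sign one, which produces the signed adjacency matrix.

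First I would fix the combinatorial description of a $2$-lift. Every $2$-lift of $G$ arises from a \emph{signing} $s\colon E(G)\to\{+1,-1\}$: set $V(\bar G)=V(G)\times\{0,1\}$, and for $uv\in E(G)$ join $(u,0)\sim(v,0)$ and $(u,1)\sim(v,1)$ when $s(uv)=+1$, while joining $(u,0)\sim(v,1)$ and $(u,1)\sim(v,0)$ when $s(uv)=-1$. Split $A(G)=A_{+}+A_{-}$, where $A_{\pm}$ is the $\{0,1\}$-matrix recording the edges with sign $\pm 1$. Ordering the vertices of $\bar G$ with fibre coordinate $0$ first, one gets
$$A(\bar G)=\begin{pmatrix}A_{+}&A_{-}\\ A_{-}&A_{+}\end{pmatrix}.$$

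Next I would conjugate $A(\bar G)$ by the symmetric orthogonal matrix $P=\tfrac{1}{\sqrt2}\begin{pmatrix}I&I\\ I&-I\end{pmatrix}$, whose two block columns implement the trivial and the sign representation of $\mathbb{S}_2$. A one-line block computation yields
$$P^{\top}A(\bar G)P=\begin{pmatrix}A_{+}+A_{-}&0\\ 0&A_{+}-A_{-}\end{pmatrix}=\begin{pmatrix}A(G)&0\\ 0&A(G_s)\end{pmatrix},$$
where $G_s$ denotes the signed graph with underlying graph $G$ and edge signs given by $s$, so that its adjacency matrix is $A(G_s)=A_{+}-A_{-}$. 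Thus $A(\bar G)$ is similar to the block-diagonal matrix $A(G)\oplus A(G_s)$, and hence its spectrum is the multiset union of the spectra of $A(G)$ and $A(G_s)$, which is precisely the claim. (In the eigenvector language used by Bilu and Linial, this says that $x\mapsto(x,x)$ carries eigenvectors of $A(G)$ to eigenvectors of $A(\bar G)$ with the same eigenvalue, $y\mapsto(y,-y)$ does the same for $A(G_s)$, and the $2n$ vectors so obtained are linearly independent.)

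The only point requiring real care is the dictionary between the topological notion of a $2$-fold covering of $G$ --- a covering map with two-element fibres, together with the fibre-transposition data it attaches to each edge --- and the signing $s$ above: one must check that every such covering comes from a unique $s$ and that the attached fibre permutations reproduce the displayed block form. I expect this translation, rather than any spectral manipulation, to be the main (minor) obstacle. The same scheme, with the two characters of $\mathbb{S}_2$ replaced by the full set of irreducible representations of the structure group, is what gets upgraded to $n$-fold coverings and, in the simplicial setting, to incidence graphs of complexes later in the paper.
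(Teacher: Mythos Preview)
Your argument is correct. Note that the paper does not supply its own proof of this theorem: it is quoted from Bilu--Linial, who (as the paper remarks) used an eigenvector argument, and the paper observes that the result can equally be obtained via the sign representation of $\mathbb{S}_2$. Your block-diagonalization by $P=\tfrac{1}{\sqrt2}\begin{pmatrix}I&I\\ I&-I\end{pmatrix}$ is exactly that sign-representation proof, and it is precisely the template the paper then lifts to simplicial complexes in Theorem~\ref{SpecUnion}. One small caveat: the signing $s$ attached to a given $2$-lift is not unique (switching signs around a vertex yields an isomorphic lift), so you should say ``comes from some $s$'' rather than ``a unique $s$''; existence is guaranteed by the Gross--Tucker result (Lemma~\ref{per}), and that is all your argument needs.
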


The signed graph $G_s$ in Theorem \ref{G-2lift} is determined as follows.
According to a result of Gross and Tucker \cite{GT} (Lemma \ref{per}),
the $2$-lift $\bar{G}$ is isomorphic to a derived graph $G^\psi$ obtained from a permutation assignment $\psi: E(G) \to \mathbb{S}_2$, which assigns a permutation in $\mathbb{S}_2$ to each edge of $G$.
The signing $s$ of $G_s$ is defined by $s(e)=\sgn \psi(e)$ for each edge $e$ of $G$.

Let $K$ be a simplicial complex, and let $L_i^{\up}(K)$ and $\Delta_i^{\up}(K)$ denote the $i$-up combinatorial Laplace operator and normalized Laplace operator of $K$, respectively.
Horak and Jost \cite{HJ13A} established the following spectral inclusion property between a covering complex $K$ and its base complex $M$, where $\spec A$ denotes the spectrum of an operator or matrix $A$.

\begin{thm}\cite{HJ13B}\label{inclusion}
Let $\phi: K \to M$ be a $k$-fold covering map.
Then
$$ \spec L_i^{\up}(M) \subset \spec L_i^{\up}(K), ~ \spec \Delta_i^{\up}(M) \subset \spec \Delta_i^{\up}(K).$$
\end{thm}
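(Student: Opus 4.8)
The plan is to realize the inclusion through the pullback of cochains along $\phi$. Work in the Horak--Jost setting: $C^i(K)$ and $C^i(M)$ are the spaces of real alternating $i$-cochains, each equipped with the weighted inner product that defines the operators in question, $\d_i\colon C^i\to C^{i+1}$ denotes the (weighted) coboundary and $\d_i^\ast$ its adjoint, so that $L_i^{\up}=\d_i^\ast\d_i$ in the appropriate degree and $\Delta_i^{\up}$ is the same expression for the normalizing weight. Define $\phi^\ast\colon C^i(M)\to C^i(K)$ on oriented simplices by $(\phi^\ast f)[v_0,\dots,v_i]=f[\phi(v_0),\dots,\phi(v_i)]$; since $\phi$ restricted to any simplex is injective, this is well defined and alternating. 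Because $\phi$ is surjective on $i$-simplices, $\phi^\ast$ is injective: if $f[\tau]\neq 0$, then $(\phi^\ast f)[\sigma]\neq 0$ for any appropriately oriented preimage $\sigma$ of $\tau$. (In fact the covering compatibility of weights makes $\phi^\ast$ a $\sqrt{k}$-scaled isometric embedding, but injectivity is all that is needed below.)

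First I would record the two intertwining identities. That $\phi^\ast$ is a cochain map, $\phi^\ast\d_i^M=\d_i^K\phi^\ast$, is the usual functoriality of simplicial cochains under a simplicial map and costs only orientation bookkeeping. The substantive identity is the dual one, $\phi^\ast(\d_i^M)^\ast=(\d_i^K)^\ast\phi^\ast$. To obtain it I would use the explicit local formula for the codifferential, $\bigl((\d_i)^\ast g\bigr)[\sigma]=\sum_{\rho}[\rho:\sigma]\,\tfrac{w(\rho)}{w(\sigma)}\,g[\rho]$ with the sum over $(i+1)$-simplices $\rho$ having $\sigma$ as a face, together with the defining features of a (strong) covering reviewed in Section~\ref{Secov}: $\phi$ restricts to a bijection from the cofaces of $\sigma$ in $K$ onto the cofaces of $\phi(\sigma)$ in $M$, and $w_K(\sigma)=w_M(\phi(\sigma))$ for every simplex $\sigma$. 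Substituting these, the two sides of the dual identity agree simplex by simplex. The same computation handles $\Delta_i^{\up}$ once one notes that a covering preserves the number of cofaces of each simplex, so the normalizing weight is itself covering-compatible.

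Granting both identities, the conclusion is formal: $L_i^{\up}(K)\,\phi^\ast=(\d_i^K)^\ast\d_i^K\phi^\ast=(\d_i^K)^\ast\phi^\ast\d_i^M=\phi^\ast(\d_i^M)^\ast\d_i^M=\phi^\ast L_i^{\up}(M)$, and likewise for $\Delta_i^{\up}$. Hence if $L_i^{\up}(M)v=\lambda v$ with $v\neq 0$, then $\phi^\ast v\neq 0$ and $L_i^{\up}(K)(\phi^\ast v)=\lambda\,\phi^\ast v$, so $\lambda\in\spec L_i^{\up}(K)$; the normalized case is identical. I expect the main obstacle to be exactly the dual intertwining $\phi^\ast(\d_i^M)^\ast=(\d_i^K)^\ast\phi^\ast$: this is the only place the covering hypothesis is used in full strength, and it is the step that fails for more general discrete covering maps---which is precisely why Gustavson's spectral inclusion theorem and the Universal Lifting Theorem admit counterexamples in that generality. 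A secondary nuisance is keeping the incidence signs $[\rho:\sigma]$ and the orientations of simplices and their lifts consistent throughout; fixing an orientation on each simplex of $M$ once and transporting it to the fiber tames most of this. As an alternative one could run the same argument with the transfer (fiber-sum) map $C^i(K)\to C^i(M)$, which is surjective and intertwines the operators for the same reasons, and conclude from surjectivity; the representation-theoretic decomposition developed later in the paper gives yet another, sharper route.
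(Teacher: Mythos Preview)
Your argument is correct and is essentially the original eigenvector/pullback proof of Horak and Jost in \cite{HJ13B}; in fact your $\phi^\ast f$ is exactly the cochain $\bar f$ of Lemma~\ref{kerupdown}, and the computation there shows precisely the intertwining $\L_i^{\up}(K)\phi^\ast=\phi^\ast\L_i^{\up}(M)$ you want (the paper states it only for the kernel, but the displayed calculation gives $(\L_i^{\up}(K)\bar f)([F])=\sgn([F],[\phi(F)])\cdot(\L_i^{\up}(M)f)([\phi(F)])$ in full). The paper's own proof of Theorem~\ref{inclusion}, however, goes a different way: it first establishes the block decomposition of Theorem~\ref{mainthm} via the permutation voltage on the incidence graph $B_i(M)$ and the representation theory of $\Psi_i$, and then reads off the inclusion from the fact that $\L_i^{\up}(M)$ occurs as one summand. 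Your route is shorter and more elementary for the bare inclusion; the paper's route costs more machinery but yields the remaining eigenvalues as well, which is the point of the paper.

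One wording issue to tighten: the identity $w_K(\sigma)=w_M(\phi(\sigma))$ is not a ``defining feature of a strong covering'' as you phrase it. It is an extra compatibility of the weights that must be checked for the specific operators in the statement. For $L_i^{\up}$ it is trivial (all weights are $1$); for $\Delta_i^{\up}$ it follows by induction from the top down, using Lemma~\ref{localinc} to see that facets correspond to facets and that cofaces are in bijection---exactly the verification the paper carries out in its proof. You do say this for the normalized case, so the mathematics is fine; just don't fold the weight hypothesis into the definition of covering.
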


We focus on the following problem:
\emph{What are the remaining eigenvalues of the covering complex except the eigenvalues of the base complex?}

To outline the method for addressing this problem, we first introduce some fundamental concepts (for a comprehensive treatment, see Section \ref{sec2}).
Let $K$ be a simplicial complex, and let $S_i(K)$ denote the set of all $i$-faces of $K$.
Let $C_i(K,\R)$ represent the $i$-th chain group of $K$, and let $\p_i: C^{i}(K,\R) \to C^{i-1}(K,\R)$ denote the boundary map of $K$.
The coboundary map $\d_i: C^{i}(K,\R) \to C^{i+1}(K,\R)$ is the conjugate of $\p_{i+1}$.
By introducing inner products on $C^i(K,\R)$ and $C^{i+1}(K,\R)$ respectively, we obtain the adjoint $\d^*_i: C^{i+1}(K,\R) \to C^i(K,\R)$ of $\d_i$.
Here, the inner product on $C^i(K,\R)$ (for each $i$) is defined such that elementary cochains are orthogonal to each other, which is equivalent to introducing a weight function $w$ on all faces of $K$.
We denote by $W_i(K)$ the diagonal matrix whose entries are the weights of $i$-faces.
The $i$-up Laplace operator of $K$ is defined as $\L_i^{up}(K)=\d_i^* \d_i$.
To avoid  ambiguity, we write $w_K$ instead of $w$,  $\d_i(K)$ instead of $\d_i$, and let $D_i(K)$  denote the matrix of $\d_i(K)$ with respect to the basis of elementary cochains.
The matrix of $\L_i^{up}(K)$ is then given by:
$$  \L_i^{up}(K)=W_i(K)^{-1} D_i(K)^\top W_{i+1}(K) D_i(K).$$

\subsection{Outline of our method}
Let $\phi: K \to M$ be a $k$-fold covering.
Our objective is to decompose the $i$-up Laplace operator $\L_i^{up}(K)$ of $K$ into a direct sum of matrices, where the $i$-up Laplace operator $\L_i^{up}(M)$ of $M$ appears as one of the summands.
Since the $i$-up Laplace operator of a simplicial complex involves only its $i$-dimensional faces and $(i+1)$-dimensional faces, it is necessary to establish a relationship between $i$-faces and $(i+1)$-faces of $K$ and those of $M$.

\subsubsection{Coverings of incidence graphs}\label{drv}
The \emph{$i$-th incidence graph} of $K$, denoted by $B_i(K)$, is a bipartite graph with vertex set $S_i(K) \cup S_{i+1}(K)$ such that $\{F, \bar{F}\}$ is an edge if and only if $F \in \p \bar{F}$, where $F \in S_i(K)$, $\bar{F} \in S_{i+1}(K)$, and $\p \bar{F}$ denotes the set of $i$-faces in the boundary of $\bar{F}$.
We show that $\phi$ induces a $k$-fold covering from $B_i(K)$ to $B_i(M)$ as follows.

\begin{lem}\label{perCom}
Let $\phi: K \to M$ be a $k$-fold covering.
Then $\phi$ induces a  $k$-fold covering from $B_i(K)$ to $B_i(M)$, and
there exists a $\psi: E(B_i(M)) \to \mathbb{S}_k$ such that $B_i(M)^\psi$ is isomorphic to $B_i(K)$ via a mapping which sends $S_i(M) \times [k]$ to $S_i(K)$ and   $S_{i+1}(M) \times [k]$ to $S_{i+1}(K)$.
\end{lem}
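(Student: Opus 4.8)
The plan is to build the graph map directly out of $\phi$, check the covering axiom vertex by vertex using the local structure of a simplicial covering, and then feed the resulting graph covering into the Gross--Tucker correspondence (Lemma \ref{per}) in a fiber-respecting form.

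First I would define a map $\phi_i\colon B_i(K)\to B_i(M)$. On vertices, set $\phi_i=\phi|_{S_i(K)\cup S_{i+1}(K)}$; since $\phi$ is simplicial it carries $i$-faces to $i$-faces and $(i+1)$-faces to $(i+1)$-faces, so $\phi_i$ is well defined and respects the bipartition. On edges, send an edge $\{F,\bar F\}$ of $B_i(K)$, where $F\in S_i(K)$, $\bar F\in S_{i+1}(K)$ and $F\in\p\bar F$, to $\{\phi(F),\phi(\bar F)\}$; because $\phi$ carries the simplex $\bar F$ isomorphically onto $\phi(\bar F)$ we have $\phi(F)\in\p\phi(\bar F)$, so the image is genuinely an edge of $B_i(M)$. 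Thus $\phi_i$ is a graph homomorphism, and the fiber over any vertex $\sigma$ of $B_i(M)$ equals $\phi^{-1}(\sigma)$, which has exactly $k$ elements because $\phi$ is a $k$-fold covering; in particular $\phi_i$ is surjective on vertices.

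Next I would verify that $\phi_i$ is a covering of graphs, i.e. that for each vertex $v$ of $B_i(K)$ the restriction of $\phi_i$ to the edges incident with $v$ is a bijection onto the edges incident with $\phi_i(v)$. If $v=\bar F\in S_{i+1}(K)$, its neighbours are the $i$-faces in $\p\bar F$; since $\phi$ restricts to a bijection from the vertex set of $\bar F$ onto that of $\phi(\bar F)$, it restricts to a bijection from $\p\bar F$ onto $\p\phi(\bar F)$, which is precisely the neighbour set of $\phi(\bar F)$, so local bijectivity holds here merely because $\phi$ is injective on simplices. If $v=F\in S_i(K)$, its neighbours are the $(i+1)$-faces of $K$ having $F$ as a facet; here I would invoke the local structure of a (strong) simplicial covering, namely that $\phi$ induces a bijection between the $(i+1)$-dimensional cofaces of $F$ in $K$ and the $(i+1)$-dimensional cofaces of $\phi(F)$ in $M$ (equivalently, $\phi$ maps the star of $F$ bijectively onto the star of $\phi(F)$). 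This yields a bijection between the two neighbourhoods, so $\phi_i$ is a covering map, and counting fibers it is a $k$-fold covering $B_i(K)\to B_i(M)$.

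Finally, with $\phi_i$ in hand I would apply the Gross--Tucker theorem (Lemma \ref{per}, \cite{GT}) to the graph covering $\phi_i\colon B_i(K)\to B_i(M)$: every $k$-fold covering of a graph is isomorphic to a derived graph $B_i(M)^\psi$ for a suitable permutation assignment $\psi\colon E(B_i(M))\to\mathbb{S}_k$, and the isomorphism may be chosen so as to carry the canonical fiber $\{v\}\times[k]$ of $B_i(M)^\psi$ onto $\phi_i^{-1}(v)$ for each vertex $v$. Since the vertex set of $B_i(M)$ is $S_i(M)\sqcup S_{i+1}(M)$, that of $B_i(M)^\psi$ is $(S_i(M)\times[k])\sqcup(S_{i+1}(M)\times[k])$, and the fiber-respecting isomorphism sends $S_i(M)\times[k]$ onto $\phi_i^{-1}(S_i(M))=S_i(K)$ and $S_{i+1}(M)\times[k]$ onto $\phi_i^{-1}(S_{i+1}(M))=S_{i+1}(K)$, which is exactly the claim. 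If $B_i(M)$ is disconnected one applies this componentwise, all fibers still having size $k$, and assembles the voltage assignments. The one step that uses more than formalities is the local bijectivity at an $i$-face $F$: this is precisely where the strong covering hypothesis of Horak--Jost is needed (the naive Rotman notion fails, as the cited counterexamples show), so I would be careful to quote the exact definition from Section \ref{Secov} and deduce the coface bijection from it; everything else is bookkeeping with the bipartite structure together with a black-box application of Lemma \ref{per}.
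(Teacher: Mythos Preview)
Your proposal is correct and follows essentially the same route as the paper: you verify local bijectivity of $\phi_i$ at an $(i+1)$-face using the bijection $\phi|_{\bar F}$ and at an $i$-face using the coface bijection coming from the strong covering hypothesis (this is precisely the content of Lemma~\ref{localinc}), and then apply Gross--Tucker (Lemma~\ref{per}) together with its fiber-respecting property $\phi\eta=p$ (Remark~\ref{rmkper}) to obtain the bipartition-preserving isomorphism. The only cosmetic difference is that the paper phrases the last step as a contradiction argument (if $\eta(F,j)\in S_{i+1}(K)$ then $\phi\eta(F,j)\in S_{i+1}(M)$, contradicting $\phi\eta(F,j)=F$), whereas you state directly that the isomorphism carries each fiber $\{v\}\times[k]$ onto $\phi_i^{-1}(v)$; these are the same observation.
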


To clarify the derived graph $B_i(M)^\psi$ in Lemma \ref{perCom}, we introduce the following definitions.
Let $D$ be directed graph (possibly with multiple arcs), and let $\mathbb{S}_k$ denote the symmetric group on the set $[k]:=\{1,2,\ldots,k\}$.
Let $\psi:E(D)\to\mathbb{S}_k$ which assigns a permutation to each arc of $D$.
 The pair $(D, \psi)$ is referred to as a \emph{permutation voltage digraph}.
 The \emph{derived digraph} $D^\psi$ associated with $(D, \psi)$ is a directed graph with vertex set $V(D)\times[k]$ such that $((u, i),(v, j))$ is an arc of $D^\psi$ if and only if $(u, v)\in E(D)$ and $i=\psi(u, v)(j)$.
For a simple graph $G$, let $\overleftrightarrow{G}$ denote the symmetric directed graph obtained from $G$ by replacing each edge $\{u, v\}$ by two arcs with opposite directions: $e=(u, v)$ and $e^{-1}:= (v, u)$.
Let $\psi: E(\overleftrightarrow{G})\to\mathbb{S}_k$ be a permutation assignment satisfying $\psi(e)^{-1}=\psi(e^{-1})$ for each arc $e$ of $\overleftrightarrow{G}$.
By definition, the derived digraph $\overleftrightarrow{G}^\psi$ (abbreviated as $G^\psi$) has symmetric arcs and is thus regarded as a simple graph.

\subsubsection{Relations between coboundary maps}\label{Rcob}
The entries of $D_i(K)$  (the matrix of the coboundary map $\d_i(K)$) are given by
$$D_i(K)_{[\bar{F}]^*, [F]^*}=\sgn([F], \p [\bar{F}]),$$
where $F \in S_i(K)$, $\bar{F} \in S_{i+1}(K)$, and $\sgn([F], \p [\bar{F}])$ is defined in \eqref{sgn1}.
By Lemma \ref{perCom}, there exists an isomorphism
$$\eta: B_i(M)^\psi \to B_i(K)$$ that maps $S_i(M) \times [k]$ to $S_i(K)$ and   $S_{i+1}(M) \times [k]$ to $S_{i+1}(K)$.
This allows us to relabel the rows and columns of $D_i(K)$ such that
$$ D_i(K)_{\eta^{-1}(\bar{F}]), \eta^{-1}([F])}:=D_i(K)_{[\bar{F}]^*, [F]^*}=\sgn([F], \p [\bar{F}]);$$
or equivalently,
\begin{equation}\label{DiK} D_i(K)_{(\bar{G},l),(G,j)}=\sgn([\eta(G,j)], \p [\eta(\bar{G},l)]).
\end{equation}
Similarly, we relabel the rows and columns of $W_i(K)$ as follows:
\begin{equation}\label{WiK}
 W_i(K)_{(G,l),(G,l)}=w(\eta(G,l)).
\end{equation}

Let $\Lambda_{i+1}(M)^\eta$ denote the diagonal matrix with rows and columns indexed by $S_{i+1}(M) \times [k]$, defined by
\begin{equation}\label{Lmdi1p} \Lambda_{i+1}(M)^\eta_{(\bar{G},i),(\bar{G},i)}=\sgn([\eta(\bar{G},i)], [\bar{G}]),
\end{equation}
and let $\Lambda_i(M)^\eta$ denote the diagonal matrix with rows and columns indexed by $S_{i}(M)  \times [k]$, defined by
\begin{equation}\label{Lmdip} \Lambda_i(M)^\eta_{(G,i),(G,i)}=\sgn([\eta(G,i)], [G]),
\end{equation}
where the sign function is defined in \eqref{sgn2}.

Let $ D_i(M)^\psi$ be the matrix with rows  indexed by  $S_{i+1}(M) \times [k]$ and columns indexed by $S_{i}(M)  \times [k]$, defined by
\begin{equation}\label{tdilp}
 D_i(M)^\psi_{(\bar{G},l),(G,j)}=
\begin{cases}
\sgn([G], \p [\bar{G}]), & \text{if~} G \in \p \bar{G} \text{~and~} l = \psi(\bar{G},G)(j),\\
0, & \text{otherwise}.
\end{cases}
\end{equation}

\begin{lem}\label{F-DiK}
Let $\phi: K \to M$ be a $k$-fold covering map, and let $\psi: E(B_i(M) \to \mathbb{S}_k$ be the permutation assignment such that $\eta: B_i(M)^\psi \to B_i(K)$ is an isomorphism.
Then
\begin{equation}\label{matrel}
D_i(K)=\Lambda_{i+1}(M)^\eta D_i(M)^\psi \Lambda_i(M)^\eta.
\end{equation}
where $\Lambda_{i+1}(M)^\eta$, $\Lambda_i(M)^\eta$ and $D_i(M)^\psi$ are defined in (\ref{Lmdi1p}), (\ref{Lmdip})  and (\ref{tdilp}), respectively.
\end{lem}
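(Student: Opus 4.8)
The plan is to verify \eqref{matrel} entrywise, with rows indexed by $S_{i+1}(M)\times[k]$ and columns by $S_i(M)\times[k]$. Since $\Lambda_{i+1}(M)^\eta$ and $\Lambda_i(M)^\eta$ are diagonal matrices indexed by the rows, respectively columns, of $D_i(M)^\psi$, the product on the right of \eqref{matrel} merely rescales rows and columns, so that for $\bar G\in S_{i+1}(M)$, $G\in S_i(M)$ and $l,j\in[k]$,
\begin{equation}\label{eq:rhsentry}
\bigl(\Lambda_{i+1}(M)^\eta D_i(M)^\psi \Lambda_i(M)^\eta\bigr)_{(\bar G,l),(G,j)}
= \sgn([\eta(\bar G,l)],[\bar G])\; D_i(M)^\psi_{(\bar G,l),(G,j)}\; \sgn([\eta(G,j)],[G]).
\end{equation}
First I would check that the two sides of \eqref{matrel} have the same support. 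By \eqref{tdilp}, the entry in \eqref{eq:rhsentry} is nonzero precisely when $(\bar G,l)$ and $(G,j)$ are adjacent in the derived graph $B_i(M)^\psi$; by \eqref{DiK}, the entry $D_i(K)_{(\bar G,l),(G,j)}=\sgn([\eta(G,j)],\p[\eta(\bar G,l)])$ is nonzero precisely when $\eta(G,j)\in\p\,\eta(\bar G,l)$, i.e.\ when $\eta(G,j)$ and $\eta(\bar G,l)$ are adjacent in $B_i(K)$. These two conditions coincide because $\eta\colon B_i(M)^\psi\to B_i(K)$ is a graph isomorphism (Lemma \ref{perCom}), so it remains to compare the nonzero entries.

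Suppose then that these entries are nonzero. By the previous paragraph, $(\bar G,l)$ and $(G,j)$ are adjacent in $B_i(M)^\psi$, hence $G\in\p\bar G$ and $l=\psi(\bar G,G)(j)$, so $D_i(M)^\psi_{(\bar G,l),(G,j)}=\sgn([G],\p[\bar G])$ by \eqref{tdilp}; moreover $F:=\eta(G,j)$ lies in $\p\bar F$, where $\bar F:=\eta(\bar G,l)$. Since $\phi$ is a covering map we have $\phi(\bar F)=\bar G$ and $\phi$ restricts to a bijection $\bar F\to\bar G$ carrying $F$ onto $G$. Combining \eqref{DiK}, \eqref{eq:rhsentry} and these observations, the identity to be proved reduces to the purely local relation
\begin{equation}\label{eq:localsign}
\sgn([F],\p[\bar F]) \;=\; \sgn([\bar F],[\bar G])\cdot\sgn([G],\p[\bar G])\cdot\sgn([F],[G]),
\end{equation}
which should hold for every choice of orientations on the four simplices involved.

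To establish \eqref{eq:localsign} I would invoke the chain map induced by $\phi$. Being a simplicial covering, $\phi$ is injective on each simplex, so it induces $\phi_\#\colon C_\bullet(K,\R)\to C_\bullet(M,\R)$ on oriented simplicial chains with $\phi_\#[H]=\sgn([H],[\phi(H)])\,[\phi(H)]$, and $\phi_\#$ commutes with the boundary operators. Applying $\p\,\phi_\#=\phi_\#\,\p$ to $[\bar F]$, using that $H\mapsto\phi(H)$ is a bijection from $\p\bar F$ onto $\p\bar G$, and comparing coefficients of $[G]$, one obtains $\sgn([\bar F],[\bar G])\,\sgn([G],\p[\bar G])=\sgn([F],\p[\bar F])\,\sgn([F],[G])$, which is \eqref{eq:localsign} because every factor is $\pm1$. (Alternatively, \eqref{eq:localsign} can be verified by hand: fix an ordering $\bar G=\{w_0,\dots,w_{i+1}\}$ with $G=\bar G\setminus\{w_t\}$, pull it back along $\phi|_{\bar F}$ to an ordering of $\bar F$, note that for these reference orientations every $\sgn$ in \eqref{eq:localsign} equals $(-1)^t$ or $1$ so the relation holds, and then observe that flipping the orientation of any one of $\bar F,F,\bar G,G$ multiplies both sides of \eqref{eq:localsign} by $-1$.) This completes the verification of \eqref{matrel}. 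The step needing the most care is the sign bookkeeping in \eqref{eq:localsign}: one must keep straight that $\sgn([F],[G])$, as defined in \eqref{sgn1}--\eqref{sgn2}, records exactly how $\phi$ transports orientations, and that the bijection $\phi|_{\bar F}$ intertwines the boundary maps of $K$ and $M$; once $\phi_\#$ is recognized as a chain map, nothing further is required.
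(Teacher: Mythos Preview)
Your proof is correct and follows essentially the same route as the paper: match supports via the graph isomorphism $\eta$, then compare nonzero entries via the local sign identity \eqref{eq:localsign}, which is exactly formula \eqref{sgncov} (quoted there from \cite{HJ13B}) specialized to $F=\eta(G,j)$, $\bar F=\eta(\bar G,l)$; your chain-map argument simply reproves that formula. One small correction: the step ``since $\phi$ is a covering map we have $\phi(\bar F)=\bar G$'' is not a consequence of $\phi$ being a covering alone---it uses the compatibility $\phi\circ\eta=p$ from Remark~\ref{rmkper}, which the paper invokes explicitly at this point.
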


The entries of $D_i(M)$ (the matrix of $\d_i(M)$) are given by
$$D_i(M)_{[\bar{G}]^*, [G]^*}=\sgn([G], \p_{i+1} [\bar{G}]).$$
Relabeling the rows and columns of $D_i(M)$, we have $$D_i(M)_{\bar{G},G}:=D_i(M)_{[\bar{G}]^*, [G]^*}=\sgn([G], \p_{i+1} [\bar{G}]).$$
Notably, $D_i(M)^\psi$ is closely related to $D_i(M)$,
thereby establishing an indirect relationship between $D_i(K)$ and $D_i(M)$.

\subsubsection{Representations of permutation groups}\label{Rep}
Let
\begin{equation}\label{Psi}\Psi_i=\langle \psi(G,\bar{G}): G \in S_i(M), \bar{G} \in S_{i+1}(M), G \in \p \bar{G}\rangle,
\end{equation}
 denote the subgroup of $\mathbb{S}_k$ generated by the permutations assigned by $\psi$.
For each $g \in \Psi_i$, define $D^g_i(M)=(D^g_{\bar{G}, G})$ such that
\begin{equation}\label{Dg}
D^g_{\bar{G}, G}=
\begin{cases}
\sgn([G], \p [\bar{G}]), & \text{if~} G \in \p \bar{G} \text{~and~} \psi(G, \bar{G})=g,\\
0, & \mbox{otherwise}.
\end{cases}
\end{equation}
Then
\begin{equation}\label{decompL}  D_i(M)=\sum_{g \in \Psi} D_i^g(M).
\end{equation}

The permutation representation $\varrho$ of $\Psi$ maps
each $g \in \Psi$ to a permutation matrix, namely
\begin{equation}\label{PRe}
\varrho: \Psi \to \text{GL}_k(\mathbb{C}), g \mapsto P^g=(p_{ij}^g),
\end{equation}
where
$p_{ij}^g=1$ if $i=g(j)$ and $p_{ij}^g=0$ otherwise.
This yields the decomposition
\begin{equation}\label{Dsum}
 D_i(M)^\psi=\sum_{g \in \Psi_i} D_i^g(M) \otimes P^g.
\end{equation}

By the theory of permutation group representations, let
$$ \varrho=\varrho_1 \oplus \varrho_2 \oplus \cdots \oplus \varrho_t$$
be the decomposition of $\varrho$ into a sum of irreducible sub-representations,
where $\varrho_1=1$ is the identity representation of degree one.
So, there exists an invertible matrix $T$ such that for all $g \in \Psi_i$,
\begin{equation}\label{rep}
 T^{-1} P^g T= I_1 \oplus \varrho_2(g) \oplus \cdots \oplus \rho_t(g),
\end{equation}
where $I_1$ is the identity matrix of order $1$ corresponding to $\varrho_1$, and $\varrho_2(g),\ldots, \varrho_t(g)$ are the matrices associated with   $\varrho_2,\ldots,\varrho_t$ respectively.

Here, we provide an illustration of the decomposition of $D_i(M)^\psi$ by permutation representation.
The same approach will be employed for the decomposition of $\L_i^{up}(K)$ in Section \ref{sec4}.
By \eqref{Dsum}, \eqref{rep} and \eqref{decompL}, we have
\begin{align*}
 (I \otimes T)^{-1} D_i(M)^\psi (I  \otimes T) & =
\sum_{g \in \Psi_i} D_i^g(M) \otimes (T^{-1} P^g T) \\
& = \sum_{g \in \Psi_i} D_i^g(M) \otimes \big(I_1 \oplus \varrho_2(g) \oplus \cdots \oplus \rho_t(g)\big) \\
& = \sum_{g \in \Psi_i} D_i^g(M) \oplus \bigg(\sum_{g \in \Psi_i} D_i^g(M) \otimes \varrho_2(g) \bigg) \oplus \cdots \oplus \bigg(\sum_{g \in \Psi_i} D_i^g(M) \otimes \varrho_t(g)\bigg) \\
& = D_i(M) \oplus \bigg(\sum_{g \in \Psi_i} D_i^g(M) \otimes \varrho_2(g)\bigg) \oplus \cdots \oplus \bigg(\sum_{g \in \Psi_i} D_i^g(M) \otimes \varrho_t(g)\bigg).
\end{align*}

\subsection{Main results}\label{main}
Herein, we present key results concerning the $i$-up Laplace operators. Analogous results for the $i$-down Laplace operators can be derived using a similar approach. Additionally, we characterize the special case where the permutation group $\Psi_i$ (as defined in \eqref{Psi}) is abelian.

\begin{thm}\label{mainthm}
Let $\phi: K \to M$ be a $k$-fold covering map, and  let $\psi: E(B_i(M) \to \mathbb{S}_k$ be the permutation assignment such that $\eta: B_i(M)^\psi \to B_i(K)$ is an isomorphism.
Let $\Psi_i$ be the group defined in (\ref{Psi}), whose permutation representation $\varrho$ has a decomposition as in (\ref{rep}) for some invertible matrix $T$.
Suppose that $w_K(F)=w_M(\phi(F))$ for all $F \in K$.
Then
\begin{multline*}
 (I \otimes T)^{-1} \Lambda_i(M)^\eta\L_i^{up}(K)(\Lambda_i(M)^\eta)^{-1} (I \otimes T) \\
 =\L_i^{up}(M) \oplus \bigoplus_{j=2}^t \bigg( \sum_{g \in \Psi_i} \big(W_i(M)^{-1} (D_i^g(M))^\top \big) \otimes \varrho_j(g^{-1}) \bigg) \bigg(\sum_{g \in \Psi_i} \big(W_{i+1}(M)D_i^g(M)\big) \otimes \varrho_j(g)\bigg),
\end{multline*}
where  $\Lambda_i(M)^\eta$ is  defined in (\ref{Lmdip}) and $D_i^g(M)$ is defined in (\ref{Dg}).
\end{thm}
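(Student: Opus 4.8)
The plan is to assemble the claimed identity from the three ingredients developed in Sections~\ref{Rcob} and \ref{Rep}. First I would recall that $\L_i^{up}(K)=W_i(K)^{-1}D_i(K)^\top W_{i+1}(K)D_i(K)$, and substitute the factorization $D_i(K)=\Lambda_{i+1}(M)^\eta D_i(M)^\psi\Lambda_i(M)^\eta$ from Lemma~\ref{F-DiK}. Since $\Lambda_{i+1}(M)^\eta$ and $\Lambda_i(M)^\eta$ are diagonal $\pm1$ matrices (hence orthogonal involutions), and since the hypothesis $w_K(F)=w_M(\phi(F))$ together with \eqref{WiK} forces $W_i(K)=I\otimes$-type block structure matching $W_i(M)\otimes I_k$ (more precisely, $W_i(K)$ relabelled via $\eta$ equals $W_i(M)\otimes I_k$, because the weight of $\eta(G,l)$ is $w_M(G)$), the conjugation by $\Lambda_i(M)^\eta$ on the left-hand side collapses the outer $\Lambda_i(M)^\eta$ factors of $D_i(K)$ and $D_i(K)^\top$, while the two copies of $\Lambda_{i+1}(M)^\eta$ in $D_i(K)^\top W_{i+1}(K) D_i(K)$ surround $W_{i+1}(K)=W_{i+1}(M)\otimes I_k$ and cancel because diagonal matrices commute. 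The upshot is
\[
\Lambda_i(M)^\eta\,\L_i^{up}(K)\,(\Lambda_i(M)^\eta)^{-1}
= \big(W_i(M)^{-1}\otimes I_k\big)\,D_i(M)^{\psi\top}\,\big(W_{i+1}(M)\otimes I_k\big)\,D_i(M)^\psi .
\]

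Next I would conjugate by $I\otimes T$ and invoke \eqref{Dsum}, namely $D_i(M)^\psi=\sum_{g\in\Psi_i}D_i^g(M)\otimes P^g$. Using \eqref{rep}, $(I\otimes T)^{-1}(A\otimes P^g)(I\otimes T)=A\otimes(T^{-1}P^gT)=A\otimes\big(I_1\oplus\varrho_2(g)\oplus\cdots\oplus\varrho_t(g)\big)$, and similarly for $D_i(M)^{\psi\top}=\sum_{g}D_i^g(M)^\top\otimes (P^g)^\top$, noting $(P^g)^\top=P^{g^{-1}}$ so that $T^{-1}(P^g)^\top T=I_1\oplus\varrho_2(g^{-1})\oplus\cdots\oplus\varrho_t(g^{-1})$. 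Because $W_i(M)^{-1}\otimes I_k$ and $W_{i+1}(M)\otimes I_k$ are unaffected by the conjugation (the $I_k$ factor is inert) and distribute into the tensor factors, multiplying the three conjugated matrices and using that the block-diagonal structure $I_1\oplus\varrho_2(g)\oplus\cdots$ is preserved under products (block $j$ of a product is the product of block-$j$ entries, summed over $g$ only inside each block since the tensor sums are over the \emph{same} index set), the whole expression splits as a direct sum over $j=1,\dots,t$. The $j=1$ block is $\sum_{g}W_i(M)^{-1}D_i^g(M)^\top\cdot\sum_g W_{i+1}(M)D_i^g(M)=W_i(M)^{-1}D_i(M)^\top W_{i+1}(M)D_i(M)=\L_i^{up}(M)$ by \eqref{decompL}, and the $j$-th block for $j\ge2$ is exactly the stated summand.

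The step I expect to require the most care is the bookkeeping of the two tensor-sum factors under multiplication: one must check that
\[
\Big(\sum_{g}A_g\otimes\varrho_j(g^{-1})\Big)\Big(\sum_{h}B_h\otimes\varrho_j(h)\Big)
=\sum_{g,h}A_gB_h\otimes\varrho_j(g^{-1}h),
\]
and then verify that upon regrouping by the product $g^{-1}h$ this is what the conjugation of $D_i(M)^{\psi\top}\,(W_{i+1}(M)\otimes I_k)\,D_i(M)^\psi$ actually produces block-by-block—i.e.\ that the cross terms in the original (un-conjugated) product land in the right irreducible block and are not lost. This is where one uses that $\varrho(g^{-1})\varrho(h)=\varrho(g^{-1}h)$ is a genuine representation and that the block decomposition \eqref{rep} is simultaneous for all group elements, so block $j$ of the product equals the product of the block-$j$ pieces. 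The remaining verifications—that $\Lambda$'s are orthogonal involutions, that the relabelled weight matrices are $W_\bullet(M)\otimes I_k$, and that diagonal matrices slide past one another—are routine and I would dispatch them quickly.
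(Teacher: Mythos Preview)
Your proposal is correct and follows essentially the same route as the paper's proof: substitute Lemma~\ref{F-DiK} into $\L_i^{\up}(K)=W_i(K)^{-1}D_i(K)^\top W_{i+1}(K)D_i(K)$, use that the $\Lambda$'s are signature matrices commuting with the diagonal weight matrices to reduce to $(W_i(M)^{-1}\otimes I_k)\,D_i(M)^{\psi\top}\,(W_{i+1}(M)\otimes I_k)\,D_i(M)^\psi$, then conjugate by $I\otimes T$ and expand via \eqref{Dsum} and \eqref{rep}. The only point you gloss over slightly is that $W_i(K)_{(G,j),(G,j)}=w_K(\eta(G,j))=w_M(\phi\eta(G,j))=w_M(G)$ requires Remark~\ref{rmkper} ($\phi\eta=p$), which the paper invokes explicitly.
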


By Theorem \ref{mainthm}, if all irreducible sub-representations $\varrho_j$ of $\Psi_i$ are known, the remaining eigenvalues of the covering complex $K$ can be determined.
In particular, this recovers the spectral inclusion property established by  Horak and Jost \cite{HJ13B} (see Theorem \ref{inclusion}).

Before introducing the spectral union property for $2$-fold covering complexes, we first introduce  incidence-signed complexes, which are analogous to signed graphs.
Let $K$ be a simplicial complex, and let $F, \bar{F} \in K$.
If $F \in \p \bar{F}$, then $(F,\bar{F})$ is called an \emph{incidence} of $K$.

\begin{defi}\label{inc-sgn}
The \emph{incidence-signed complex} is a pair $(K, s)$, where $K$ is a simplicial complex, and $s: K \times K \to \{-1,0,1\}$ satisfies
$s(F, \bar{F}) \in \{-1,1\}$ if $F \in \p \bar{F}$, and $s(F, \bar{F})=0$ otherwise.
\end{defi}

In Section \ref{sec2} we define the Laplace operators for incidence-signed complex $(K,s)$, following the same framework used for Laplace operators of ordinary complexes.
It is well-known that $\mathbb{S}_2$ has two irreducible representations: the identity representation and the sign representation.
Applying Theorem \ref{mainthm}, we obtain the following spectral union property, which extends Bilu and Linial's work \cite{Bilu} (Theorem \ref{G-2lift}) from graphs to complexes.

\begin{thm}\label{SpecUnion}
Let $\phi: K \to M$ be a $2$-fold covering map.
Suppose that $w_K(F)=w_M(\phi(F))$ for all $F \in K$.
Then, as a multiset union,
$$ \spec \L_i^{up}(K))= \spec\L_i^{up}(M) \cup \spec\L_i^{up}(M,s),$$
where $s(F, \bar{F})=\sgn \psi(F, \bar{F})$ for $(F,  \bar{F}) \in S_i(M) \times S_{i+1}(M)$ with $F \in \p \bar{F}$, and $\psi:E(B_i(M)) \to \mathbb{S}_2$ is such that $B_i(M)^\psi \cong B_i(K)$.
\end{thm}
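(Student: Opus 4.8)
The plan is to specialize Theorem~\ref{mainthm} to the case $k=2$, where the representation theory of $\mathbb{S}_2$ is as simple as possible, and then to recognize the single non-trivial summand in that decomposition as the $i$-up Laplace operator of the incidence-signed complex $(M,s)$.

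First I would record the representation-theoretic input. Since $\phi$ is a $2$-fold covering, the permutation assignment $\psi$ of Lemma~\ref{perCom} takes values in $\mathbb{S}_2$, so the group $\Psi_i$ of \eqref{Psi} satisfies $\Psi_i\le\mathbb{S}_2$. The permutation representation $\varrho$ of $\Psi_i$ on $[2]$ decomposes as $\varrho=\varrho_1\oplus\varrho_2$ with $\varrho_1=1$ the identity representation and $\varrho_2$ the restriction of the sign representation $\sgn$ of $\mathbb{S}_2$ to $\Psi_i$; thus $t=2$ and each $\varrho_j$ is one-dimensional, so $\varrho_2(g)=\sgn(g)\in\{-1,1\}$ is a scalar. (When $\Psi_i$ is trivial this reads $\varrho_2=1$, and the ensuing signing $s$ is the all-positive one, so $(M,s)$ reduces to $M$ and the conclusion below is still valid.) Substituting $t=2$ and $\varrho_2(g)=\sgn(g)$ into Theorem~\ref{mainthm}, and using $\sgn(g^{-1})=\sgn(g)$ together with the $g$-independence of $W_i(M)$ and $W_{i+1}(M)$, the only summand besides $\L_i^{up}(M)$ collapses to
\begin{equation*}
 \Bigl(\sum_{g\in\Psi_i}\sgn(g)\,W_i(M)^{-1}(D_i^g(M))^\top\Bigr)\Bigl(\sum_{g\in\Psi_i}\sgn(g)\,W_{i+1}(M)D_i^g(M)\Bigr)=W_i(M)^{-1}D^\top W_{i+1}(M)D,
\end{equation*}
where $D:=\sum_{g\in\Psi_i}\sgn(g)\,D_i^g(M)$.

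Next I would identify $D$ with the coboundary matrix of $(M,s)$. By the definition of $D_i^g(M)$ in \eqref{Dg}, the $(\bar G,G)$-entry of $D$ equals $\sgn(\psi(G,\bar G))\,\sgn([G],\p[\bar G])=s(G,\bar G)\,\sgn([G],\p[\bar G])$ when $G\in\p\bar G$ and $0$ otherwise, which is exactly the matrix $D_i(M,s)$ of the coboundary map of the incidence-signed complex $(M,s)$ introduced in Section~\ref{sec2}. Since the weights on $(M,s)$ are those of $M$, the non-trivial summand is $W_i(M)^{-1}D_i(M,s)^\top W_{i+1}(M)D_i(M,s)=\L_i^{up}(M,s)$, so Theorem~\ref{mainthm} yields
\begin{equation*}
 (I\otimes T)^{-1}\,\Lambda_i(M)^\eta\,\L_i^{up}(K)\,(\Lambda_i(M)^\eta)^{-1}(I\otimes T)=\L_i^{up}(M)\oplus\L_i^{up}(M,s).
\end{equation*}
Finally I would conclude by invariance of the spectrum under similarity: $\Lambda_i(M)^\eta$ is an invertible $(\pm1)$-diagonal matrix and $I\otimes T$ is invertible, so the left-hand side is similar to $\L_i^{up}(K)$; and the spectrum (with multiplicities) of a block-diagonal matrix is the multiset union of the spectra of its blocks. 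Hence $\spec\L_i^{up}(K)=\spec\L_i^{up}(M)\cup\spec\L_i^{up}(M,s)$ as multisets, with $s(F,\bar F)=\sgn\psi(F,\bar F)$ as claimed.

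The main obstacle is the bookkeeping in the middle step: one must match the relabelling conventions \eqref{DiK}, \eqref{WiK}, \eqref{Lmdi1p}--\eqref{Lmdip} carefully enough to be certain that $\sum_{g\in\Psi_i}\sgn(g)D_i^g(M)$ really is the coboundary matrix of $(M,s)$ in the elementary-cochain basis, and one must check that the hypothesis $w_K(F)=w_M(\phi(F))$ for all $F\in K$ is precisely what permits $W_i(K)$ and $W_{i+1}(K)$ to be written as $I\otimes W_i(M)$ and $I\otimes W_{i+1}(M)$ in the form of Theorem~\ref{mainthm}; everything else is a direct consequence of that theorem and the classification of the irreducible representations of $\mathbb{S}_2$.
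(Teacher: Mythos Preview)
Your proposal is correct and follows essentially the same route as the paper: specialize Theorem~\ref{mainthm} to $k=2$, use that the permutation representation of $\mathbb{S}_2$ splits as $1\oplus\sgn$, and identify $\sum_{g}\sgn(g)\,D_i^g(M)$ with the signed coboundary matrix $D_i^s(M)$ so that the second block is $\L_i^{\up}(M,s)$. The only cosmetic difference is that the paper first argues, via connectedness of $K$ (hence of $B_i(K)\cong B_i(M)^\psi$), that $\Psi_i$ cannot be trivial and so equals $\mathbb{S}_2$, whereas you allow $\Psi_i=\{1\}$ and observe that the resulting all-positive signing still gives the stated conclusion; both treatments are valid.
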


\subsection{Organization of the paper}
In Section \ref{sec2} we introduce basic terminology, including simplicial complexes and their Laplace operators, incidence-signed complexes, incidence-weighted complexes, and the Laplace operators defined on the latter two.
In Section \ref{Secov}, we introduce the covering complexes, and prove Lemma \ref{perCom} concerning the relationship between incidence graphs of complexes.
Section \ref{sec4} is devoted to proving our main results, including the counterpart for $i$-down Laplace operators.
We also characterize the special case where the permutation group $\Psi_i$ (defined in \eqref{Psi}) is abelian: in this case, the Laplace spectrum of $K$ is a multiset union of the Laplace spectrum of $M$ and the Laplace spectra of $(k-1)$ incidence-weighted complexes.
In the final section, using the discrete Hodge theorem, we show  that the dimension of the $i$-th cohomology group of a covering complex is greater than or equal to that of its base complex.

\section{Preliminaries}\label{sec2}
\subsection{Simplicial complexes and Laplace operators}
Let $V$ be a finite set.
An \emph{abstract simplicial complex} (simply called a \emph{complex}) $K$ over $V$ is a collection of the subsets of $V$ that is closed under inclusion.
An \emph{$i$-face} or \emph{$i$-simplex} of $K$ is an element of $K$ with cardinality $i+1$.
The \emph{dimension} of an $i$-face is $i$, and the \emph{dimension} of $K$, denoted by $\dim K$, is the maximum dimension of all faces of $K$.
Faces that are maximal under inclusion are called \emph{facets}.
Thus, a complex can be viewed as a hypergraph in which the facets correspond to hyperedges.

We assume that $\emptyset \in K$,  referred to as the empty simplex with dimension $-1$.
Let $S_i(K)$ denote the set of all $i$-faces of $\K$, where $S_{-1}(K):=\{\emptyset\}$.
The $p$-skeleton of $K$, written $K^{(p)}$, is the set of all simplices of $K$ of dimension at most $p$.
In particular, $K^{(1)}\backslash \{\emptyset\}$ is an ordinary graph, where $0$-faces are  called \emph{vertices} (denoted by $V(K)$), and $1$-faces are called \emph{edges}.
A complex $K$ is said to be \emph{connected} if the graph $K^{(1)}\backslash \{\emptyset\}$ is connected.

We say a face $F$ is \emph{oriented} if we assign an ordering of its vertices and write it as $[F]$.
Two ordering of the vertices of $F$ are said to determine the \emph{same orientation} if there is an even permutation transforming one ordering into the other.
If the permutation is odd, then the orientation are opposite.
The \emph{$i$-th chain group} of $K$ over $\R$, denoted by $C_i(K,\R)$, is the vector space over $\R$ generated by all oriented $i$-faces of $K$, modulo the relation $[F_1]+[F_2]=0$, where $[F_1]$ and $[F_2]$ are opposite orientations of a same face.
The \emph{$i$-th cochain group} $C^i(K,\R)$ is the dual of $C_i(K,\R)$, i.e., $C^i(K,\R)=\text{Hom}(C_i(K,\R),\R)$, which is generated by the dual basis $\{[F]^*: F \in S_i(K)\}$, where
$ [F]^*([F])=1$, and $[F]^*([F'])=0$ for  $F' \ne F$.
The functions $[F]^*$ are called the \emph{elementary cochains}.

For each integer $i=0,1,\ldots,\dim K$, The \emph{boundary map} $\p_i: C_i(K,\R) \to C_{i-1}(K,\R)$ is defined by
$$\p_i([v_0,\ldots,v_i])=\sum_{j=0}^i (-1)^j[v_0,\ldots,\hat{v}_j,\ldots,v_i],$$
for each oriented $i$-face $[v_0,\ldots,v_i]$ of $K$,
where $\hat{v}_j$ indicates the vertex $v_j$ is omitted.
This gives rise to the \emph{augmented chain complex} of $K$:
$$  \cdots \longrightarrow C_{i+1}(K,\R) \stackrel{\p_{i+1} }{\longrightarrow} C_i(K,\R) \stackrel{\p_{i} }{\longrightarrow} C_{i-1}(K,\R) \rightarrow \cdots \longrightarrow  C_{-1}(K,\R) \longrightarrow 0,$$
satisfying $\p_i \circ \p_{i+1}=0$.

The \emph{coboundary map} $\delta_{i}: C^{i}(K,\R) \to C^{i+1}(K,\R)$ is the conjugate of $\p_{i+1}$, defined by $ \d_{i} f = f \p_{i+1}$.
Thus
$$ (\d_i f)([v_0,\ldots,v_{i+1}])=\sum_{j=0}^{i+1} (-1)^jf([v_0,\ldots,\hat{v}_j,\ldots,v_{i+1}]).$$
Correspondingly, we have the \emph{augmented cochain complex} of $K$:
$$  \cdots \longleftarrow C^{i+1}(K,\R) \stackrel{\delta_{i} }{\longleftarrow} C^i(K,\R) \stackrel{\delta_{i-1} }{\longleftarrow} C^{i-1}(K,\R) \longleftarrow \cdots  {\longleftarrow} C^{-1}(K,\R) \longleftarrow  0,$$
satisfying $\delta_{i}\circ \delta_{i-1}=0$.
The \emph{$i$-th reduced cohomology group} for every $i \ge 0$ is defined to be $$\tilde{H}^i(K,\R)=\ker \delta_i / \im \delta_{i-1}.$$

 By introducing inner products on $C^i(K,\R)$ and $C^{i+1}(K,\R)$ respectively, we obtain the \emph{adjoint} $\d^*_i: C^{i+1}(K,\R) \to C^i(K,\R)$ of $\d_i$, defined by
$$ \langle \delta_i f_1, f_2 \rangle_{C^{i+1}} =\langle f_1, \delta^*_i f_2\rangle_{C^{i}}$$ for all $f_1 \in C^i(K,\R), f_2 \in C^{i+1}(K,\R)$.

\begin{defi}\cite{HJ13B}
The following three operators are defined on $C^i(K,\R)$.

(1) The $i$-dimensional combinatorial up Laplace operator or simply the \emph{$i$-up Laplace operator}: $$  \L_i^{\up}(K):=\d_i^* \d_i.$$

(2) The $i$-dimensional combinatorial down Laplace operator or simply the \emph{$i$-down Laplace operator}:
$$ \L_i^{\dw}(K):=\d_{i-1}\d_{i-1}^*.$$

(3) The $i$-dimensional combinatorial Laplace operator or simply the \emph{$i$-Laplace operator}:
 $$\L_i(K):=\L_i^{\up}(K)+\L_i^{\dw}(K)=\delta^*_i \delta_i+\delta_{i-1}\delta^*_{i-1}.$$
 \end{defi}

Horak and Jost \cite{HJ13A, HJ13B} proposed defining an inner product on $C^i(K, \R)$ such that
the elementary cochains are orthogonal to each other.
This is equivalent to introducing a weight function on all faces of $K$:
$$ w: \bigcup_{i=-1}^{\dim K} S_i(K) \to \R^+,$$
whereby the inner product is given by
$$ \langle f,g \rangle_{C^i}=\sum_{F \in S_i(K)} w(F) f([F])g([F]).$$

In this paper, the weight $w$ is implicit in the context for the Laplace operator.
If $w \equiv 1$ on all faces, the corresponding Laplacian is the \emph{combinatorial Laplace operator}, denoted by $L_i(K)$, as discussed in \cite{Duv, Fried}.
If $w$ is identically $1$ on all facets and satisfies the normalization condition:
$ w(F)=\sum_{\bar{F} \in S_{i+1}(K): F \in \p \bar{F}}w(\bar{F})$
for every non-facet $F \in S_i(K)$,
then $w$ defines the \emph{normalized Laplace operator}, denoted by $\Delta_i(K)$, as analyzed in \cite{HJ13B}.

Horak and Jost \cite{HJ13A, HJ13B} provided explicit formulas for $\L_i^{\up}$ and $\L_i^{\dw}$.
 Here, we consider the matrix forms of these operators.
With a slight abuse of notation, for any $\bar{F} \in S_{i+1}(K)$, we let $\p \bar{F}$ denote the set of all $i$-faces in the boundary of $\bar{F}$.
Given an oriented $(i+1)$-face $[\bar{F}]:=[v_0,\ldots,v_{i+1}]$ and its $i$-dimensional boundary face $[F_j]:=[v_0,\ldots,\hat{v}_j,\ldots,v_{i+1}]$,
we define
\begin{equation}\label{sgn1}
\sgn([F_j], \p[\bar{F}]]=(-1)^j,
\end{equation}
and set $\sgn([F], \p[\bar{F}]]=0$ if $F \notin \p \bar{F}$.

Let $D_i$ be the matrix of the coboundary map $\d_i: C^i \to C^{i+1}$ with respect to the bases of $C^i$ and $C^{i+1}$ consisting of elementary cochains.
Its entries are given by
 $$(D_i)_{[\bar{F}]^*, [F]^*}=\sgn([F], \p [\bar{F}]).$$
The matrix $D_i^*$ representing the adjoint operator $ \d_i^*$ satisfies
$$ (D_i^*)_{[F]^*, [\bar{F}]^*}=\frac{w(\bar{F})}{w(F)} \sgn([F], \p [\bar{F}]).$$
Let $W_i$ denote diagonal matrix whose entries are the weights of $i$-faces, i.e.,
$(W_i)_{[F]^*, [F]^*}=w(F)$.
It follows that
$ D_i^*=W_i^{-1} D_i^\top W_{i+1},$
and consequently,
\begin{equation}\label{DefL} \L_i^{\up}(K)=W_i^{-1} D_i^\top W_{i+1}D_i,~ \L_i^{\dw}(K)=D_{i-1} W_{i-1}^{-1}D_{i-1}^\top W_i.
\end{equation}

\subsection{Incidence-signed complexes and incidence-weighted complexes}
Let $(K,s)$ be an incidence-signed complex.
The \emph{signed boundary map} $\p_i^s: C_i(K,\R) \to C_{i-1}(K,\R)$ is defined by
$$\p_i^s[v_0,\ldots,v_i]=\sum_{j=0}^i (-1)^j[v_0,\ldots,\hat{v}_j,\ldots,v_i]
s(\{v_0,\ldots,\hat{v}_j,\ldots,v_i\},\{v_0,\ldots,v_i\}).$$

The \emph{signed co-boundary map} $\d^s_{i}: C^{i}(K,\R) \to C^{i+1}(K,\R)$ is the conjugate of $\p_{i+1}^s$, i.e., for all $f \in C^{i}$,
$\d^s_{i}f = f \p_{i+1}^s$.

The \emph{signed adjoint}  $(\d_{i}^s)^*: C^{i+1}(K,\R) \to C^{i}(K,\R)$ is the adjoint of $\d_{i}^s$, satisfying
$$ ( \delta_{i}^s f_1, f_2 )_{C^{i+1}} =( f_1, (\delta^s_{i})^* f_2)_{C^{i}}$$ for all $f_1 \in C^{i}, f_2 \in C^{i+1}$.

\begin{defi}
Let $(K,s)$ be an incidence-signed complex.

(1) The $i$-up Laplace operator of $(K,s)$ is defined as  $\L_i^{\up}(K,s)=(\d_i^s)^* \d_i^s$.

(2) The $i$-down Laplace operator of $(K,s)$ is defined as $\L_i^{\dw}(K,s)=\d_{i-1}^s(\d_{i-1}^s)^*$.

(3) The $i$-Laplace operator of $(K,s)$ is defined as $\L(K,s)=\L_i^{\up}(K,s)+\L_i^{\dw}(K,s)$.

\end{defi}

Let $D_i^s$ be the matrix of $\d_i^s: C^i \to C^{i+1}$ with respect to the bases consisting of elementary cochains.
Then
 $$(D_i^s)_{[\bar{F}]^*, [F]^*}=\sgn([F], \p [\bar{F}])s(F,\bar{F}),$$
The matrix $(D_i^s)^*$ of $ (\d_i^s)^*$ satisfies
$$ ((D_i^s)^*)_{[F]^*, [\bar{F}]^*}=\frac{w(\bar{F})}{w(F)} \sgn([F], \p [\bar{F}])s(F,\bar{F}),$$
and thus
$$ (D_i^s)^*=W_i^{-1} (D_i^s)^\top W_{i+1}.$$
Hence the matrices of $\L_i^{\up}(K,s)$ and $\L_i^{\dw}(K,s)$ are
\begin{equation}\label{LiupS} \L_i^{\up}(K,s)=W_i^{-1} (D_i^s)^\top W_{i+1}D_i^s,
~ \L_i^{\dw}(K,s)=D^s_{i-1} W_{i-1}^{-1}(D^s_{i-1})^\top W_i.
\end{equation}

If $s \equiv 1$, then
$ \L_i^{\up}(K,s)=\L_i^{\up}(K)$ and $\L_i^{\dw}(K,s)=\L_i^{\dw}(K).$
From this perspective, the incidence-signed complex $(K,s)$ generalizes the ordinary complex $K$.
If $s(F,\bar{F}) = \sgn([F], \p [\bar{F}])$ for all pairs $(F,\bar{F})$ with $F \in \p \bar{F}$,
then  $D_i^s = |D_i^s|$, where $|A|:=[|a_{ij}|]$ for a matrix $A=[a_{ij}]$.
In this case, $\L_i^{\up}(K,s)= |\L_i^{\up}(K)|$, and
$\L_i^{\dw}(K,s)= |\L_i^{\dw}(K)|$.

Let $|D_i|$ be the $i$-th incidence matrix of $K$ with rows indexed by $S_{i+1}(K)$ and columns indexed by $S_i(K)$, defined by
$ |D_i|_{\bar{F}, F}=1$ if $F \in \p \bar{F}$,  and $ |D_i|_{\bar{F}, F}=0$ otherwise.
Then
$$ |\L_i^{\up}(K)|=W_i^{-1} |D_i|^\top W_{i+1}|D_i|,$$
which is called the \emph{$i$-up signless Laplace operator} of $K$.
Similarly, the \emph{$i$-down signless Laplace operator} of $K$ is given by
$$ |\L_i^{\dw}(K)|=|D_{i-1}|W_{i-1}^{-1} |D_{i-1}|^\top W_{i}.$$
Notably, when $W_0$ and $W_1$ are both identity matrices, $|\L_0^{\up}(K)|$ reduces to the ordinary \emph{signless Laplacian of a graph} (the $1$-skeleton of $K$) \cite{Hae}, which has been studied in \cite{Des} for graph  nonbipartiteness and surveyed in \cite{Cve}.

In general, consider the chain and cochain groups of $K$ over complex field $\mathbb{C}$.
We assign weights to of each incidence of $K$ from  $\mathbb{C}^*:=\mathbb{C}\backslash \{0\}$.

\begin{defi}
An \emph{incidence-weighted complex} is a pair $(K, \omg)$, where $K$ is a simplicial complex, and $\omg: K \times K \to \mathbb{C}$ such that
$\omg(F, \bar{F}) \in \C^*$ if $F \in \p \bar{F}$, and $\omg(F, \bar{F})=0$ otherwise.
\end{defi}

By a similar discussion, the \emph{weighted boundary map}
$\p_i^\omg: C_i(K,\C) \to C_{i-1}(K,\C)$ is defined by
$$\p_i^\omg[v_0,\ldots,v_i]=\sum_{j=0}^i (-1)^j[v_0,\ldots,\hat{v}_j,\ldots,v_i]
\omg(\{v_0,\ldots,\hat{v}_j,\ldots,v_i\},\{v_0,\ldots,v_i\}).$$
We denote by $\d_i^\omg$ the conjugate of $\p_{i+1}^\omg$, and by $(\d_i^\omg)^*$ the adjoint of $\d_i^\omg$.
The inner product over $\C$ is defined as
$$ \langle f,g \rangle_{C^i}=\sum_{F \in S_i(K)} w(F) f([F])\overline{g([F])},$$
where $\overline{\alpha}$ denotes the conjugate of a complex number $\alpha$.

\begin{defi} Let $(K,\omg)$ be an incidence-weighted complex.

(1) The $i$-up Laplace operator of $(K,\omg)$ is defined as $\L_i^{\up}(K,\omg)=(\d_i^\omg)^* \d_i^\omg$.

(2) The $i$-down Laplace operator of $(K,\omg)$ is defined as $\L_i^{\dw}(K,\omg)=\d_{i-1}^\omg (\d_{i-1}^\omg)^*$.

(3) The $i$-Laplace operator of $(K,\omg)$ is defined as $\L_i(K,\omg)=\L_i^{\up}(K,\omg)+\L_i^{\dw}(K,\omg)$.

\end{defi}

The matrix $D_i^\omg$ of $\d_i^\omg$ is given by
$$(D_i^\omg)_{[\bar{F}]^*, [F]^*}=\sgn([F], \p [\bar{F}])\omg(F,\bar{F}).$$
The matrix $(D_i^\omg)^*$ of $ (\d_i^\omg)^*$ satisfies
$$ ((D_i^\omg)^*)_{[F]^*, [\bar{F}]^*}=\frac{w(\bar{F})}{w(F)} \sgn([F], \p [\bar{F}])\overline{\omg(F,\bar{F})}.$$
Hence, the matrices of $\L_i^{up}(K,\omg)$ and $\L_i^{down}(K,s)$ are
\begin{equation}\label{Liupw} \L_i^{\up}(K,\omg)=W_i^{-1} (D_i^\omg)^\star W_{i+1}D_i^\omg, ~\L_i^{\dw}(K,\omg)=D^\omg_{i-1} W_{i-1}^{-1}(D^\omg_{i-1})^\star W_i.
\end{equation}
where $D^\star$ denotes the conjugate transpose of a complex matrix $D$ to avoid confusion with the adjoint operator $*$.

\section{Covering complexes and incidence graphs}\label{Secov}
\begin{defi}
Let $K,M$ be two complexes. A \emph{simplicial map} from $K$ to $M$ is a map  $\phi: V(K) \to V(M)$ if whenever $\{v_0,\ldots,v_i\} \in K$, then $\{f(v_0), \ldots, f(v_i)\} \in M$. We often use the notation $\phi: K \to M$.
\end{defi}

\begin{defi}\cite{Rot}\label{cov}
Let $K,M$ be  complexes. A pair $(K,\phi)$ is called a \emph{covering complex} of $M$ if the following conditions hold:

(1) $K$ is a connected complex;

(2) $\phi: K \to M$ is a simplicial map;

(3) for each $G \in M$, $\phi^{-1}(G)$ is a union of pairwise disjoint simplices, namely, $ \phi^{-1}(G) =\cup_i F_i$ such that $\phi|_{F_i}: F_i \to G$ is a bijection for each $i$.
\end{defi}

In Definition \ref{cov}, the map $\phi$ is called a \emph{covering map}, $K$ is called a \emph{covering complex}, and $M$ is called the \emph{base  complex} of the covering.
By definition,  $\phi(F) \in S_i(M)$ for every $F \in S_i(K)$.
As noted by Horak and Jost \cite{HJ13A}, the covering complexes are an inaccurate discretization of covering spaces
(in the continuous setting), as they lack a discrete counterpart to the ``locally homeomorphic neighborhood'' requirement of topological covering spaces.
To address this, they introduced the concept a strong covering, which accurately discretizes the continuous notion of a covering.
\emph{We emphasize that all coverings of complexes in this paper are strong coverings.}

\begin{defi}\cite{HJ13B}\label{strcov}
A covering map $\phi: K \to M$ is a \emph{strong covering} if for every $G \in S_i(M)$ which is face of some $\bar{G} \in S_{i+1}(M)$, then for each $F \in \phi^{-1}(G)$, there exists $\bar{F} \in S_{i+1}(K)$ such that $F \in \p \bar{F}$ and $\phi(\bar{F}) = \bar{G}$.
\end{defi}

\begin{lem}\label{localinc}
Let $\phi: K \to M$ be a strong covering.
Then for every $G \in S_i(M)$ and every $F \in \phi^{-1}(G)$,
there exists a bijection also denoted by $\phi$ such that
$$ \phi: \{\bar{F} \in S_{i+1}(K): F \in \p \bar{F}\} \to
\{\bar{G} \in S_{i+1}(M): G \in \p \bar{G}\}, \bar{F} \mapsto \phi(\bar{F}).$$
\end{lem}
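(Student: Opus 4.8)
The plan is to identify the claimed bijection with the restriction of the simplicial action of $\phi$ to the ``upper star'' of $F$, and then to check three things in turn: that $\bar F \mapsto \phi(\bar F)$ actually maps $\{\bar F \in S_{i+1}(K) : F \in \p \bar F\}$ into $\{\bar G \in S_{i+1}(M) : G \in \p \bar G\}$, that this map is onto, and that it is one-to-one. Two structural facts will be used repeatedly: that a covering map $\phi$ restricts to an injection (hence a bijection) of vertex sets on each simplex of $K$, so in particular $\phi(S_j(K)) \subseteq S_j(M)$ for every $j$ (the remark following Definition \ref{cov}); and that for each simplex $\bar G$ of $M$ the preimage $\phi^{-1}(\bar G)$ is a vertex-disjoint union $\bigsqcup_j E_j$ of simplices with each $\phi|_{E_j} \colon E_j \to \bar G$ bijective (Definition \ref{cov}(3)).

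Well-definedness is immediate: for $\bar F \in S_{i+1}(K)$ with $F \in \p \bar F$ one has $\phi(\bar F) \in S_{i+1}(M)$, and $G = \phi(F) \subseteq \phi(\bar F)$ exhibits $G$ as an $i$-face of the $(i+1)$-face $\phi(\bar F)$, i.e. $G \in \p \phi(\bar F)$. Surjectivity is essentially a restatement of the strong covering condition: if $G$ lies in the boundary of some $(i+1)$-simplex of $M$, then Definition \ref{strcov} applied to the fixed $F \in \phi^{-1}(G)$ provides, for every $\bar G \in S_{i+1}(M)$ with $G \in \p \bar G$, some $\bar F \in S_{i+1}(K)$ with $F \in \p \bar F$ and $\phi(\bar F) = \bar G$; and if $G$ is in the boundary of no $(i+1)$-simplex, both sets are empty and there is nothing to prove.

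The one step that needs an actual argument is injectivity, and this is where I expect the only (mild) obstacle: one must use the \emph{disjointness} in Definition \ref{cov}(3), not merely the local surjectivity just invoked. Concretely, suppose $\bar F_1, \bar F_2 \in S_{i+1}(K)$ with $F \in \p \bar F_1 \cap \p \bar F_2$ and $\phi(\bar F_1) = \phi(\bar F_2) =: \bar G$, and write $\phi^{-1}(\bar G) = \bigsqcup_j E_j$ as above. Each $\bar F_r$ is an $(i+1)$-simplex of $K$ all of whose vertices lie over $\bar G$, so $\bar F_r \subseteq E_{j_r}$ for some index $j_r$; since $|\bar F_r| = i+2 = |E_{j_r}|$, in fact $\bar F_r = E_{j_r}$. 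Then $F \subseteq \bar F_1 \cap \bar F_2 = E_{j_1} \cap E_{j_2}$, and because $i \ge 0$ the face $F$ is nonempty, so vertex-disjointness of the $E_j$ forces $j_1 = j_2$ and hence $\bar F_1 = \bar F_2$. The only point requiring care is the precise reading of ``$\phi^{-1}(\bar G)$ is a union of pairwise disjoint simplices'': it must be understood as saying that the subcomplex of $K$ spanned by the vertices over $\bar G$ splits as a vertex-disjoint union of top $(i+1)$-simplices (the sheets over $\bar G$), each mapped isomorphically onto $\bar G$, so that any simplex of $K$ mapping onto $\bar G$ is a full sheet. Granting that, the argument is forced and the proof is complete.
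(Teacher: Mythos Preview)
Your proof is correct and follows essentially the same three-step outline as the paper's own argument: well-definedness from $G=\phi(F)\subset\phi(\bar F)$, surjectivity directly from Definition~\ref{strcov}, and injectivity from the pairwise disjointness in Definition~\ref{cov}(3). The paper disposes of injectivity in one line (two distinct preimages $\bar F,\bar F'$ of $\bar G$ would share the nonempty face $F$, contradicting the disjoint-sheets condition), whereas you unpack this more carefully by identifying each $\bar F_r$ with a full sheet $E_{j_r}$ and explicitly flagging the reading of ``disjoint union of simplices'' needed for that identification; but the substance is the same.
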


\begin{proof}
First, $\phi$ is well-defined, as $G = \phi(F) \subset \phi(\bar{F})$ or $G \in \p \phi(\bar{F})$, meaning $\phi(\bar{F}) \in \{\bar{G} \in S_{i+1}(M): G \in \p \bar{G}\}$.
Next, $\phi$ is an injective map; otherwise, if $\phi(\bar{F})=\phi(\bar{F}'):=\bar{G}$, then $\phi^{-1}(\bar{G})$ would contain
$\bar{F}$ and $\bar{F}'$ with $\bar{F} \cap \bar{F}'=F$, a contradiction to Definition \ref{cov} (3).
Finally, $\phi$ is a surjective map by Definition \ref{strcov}.
\end{proof}

\begin{lem}\cite{HJ13B}\label{covdeg}
Let $\phi: K \to M$ be a strong covering.
There exists a constant $k \in \mathbb{Z}^+$ such that
for each $G \in S_i(M)$ and each $i=0,1,\ldots, \dim K$,
$$ | \{ F \in S_i(K): F \in \phi^{-1}(G)\}| =k.$$
\end{lem}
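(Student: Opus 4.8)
The plan is to reduce the assertion to the case of $0$-faces and then exploit the connectedness of $K$. Since $K$ is connected, so is its image $\phi(K)$, and replacing $M$ by $\phi(K)$ affects neither the hypotheses nor the conclusion; hence we may assume $M$ is connected and $\phi$ surjective (so in particular $k\geq1$ once $K$ has a vertex — if not, then $\dim K=-1$ and there is nothing to prove). I will establish two claims: (i) for every edge $\{u,v\}$ of $M$ the number of $0$-faces of $K$ over $u$ equals the number over $v$; and (ii) for every $i$-face $G=\{v_0,\dots,v_i\}$ of $M$ the number of $i$-faces of $K$ over $G$ equals the number of $0$-faces of $K$ over $v_0$. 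Granting these, claim (i) together with the connectedness of the graph $K^{(1)}\setminus\{\emptyset\}$ shows that $|\{w\in S_0(K):\phi(w)=v\}|$ has a constant value $k$ over all vertices $v$ of $M$, and then claim (ii) gives $|\{F\in S_i(K):\phi(F)=G\}|=k$ for every face $G$, uniformly in $i$.

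For claim (i), fix an edge $e=\{u,v\}$ of $M$ and send each edge $\bar F$ of $K$ with $\phi(\bar F)=e$ to its unique endpoint lying over $u$; this endpoint is well defined because an $i$-face mapping onto an $i$-face does so bijectively, so $\phi$ restricts to a bijection $\bar F\to e$ by Definition \ref{cov}(3). This map is injective, since two distinct edges of $K$ over $e$ are disjoint by Definition \ref{cov}(3) and so cannot share the endpoint over $u$; and it is surjective, since for any vertex $\{w\}$ over $u$, Lemma \ref{localinc} applied with $i=0$ to $\{w\}$ and the edge $e\in S_1(M)$ (which has $\{u\}$ in its boundary) yields an edge of $K$ containing $w$ and mapping to $e$. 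Thus the fibers over $u$ and over $e$ have equal size; by symmetry the fiber over $v$ has the same size, proving (i).

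For claim (ii), fix $G=\{v_0,\dots,v_i\}\in S_i(M)$ and let $\Phi$ send an $i$-face $F$ of $K$ with $\phi(F)=G$ to its unique vertex over $v_0$ (again using that $\phi$ restricts to a bijection $F\to G$). Injectivity of $\Phi$ follows as before: if $\Phi(F)=\Phi(F')$ then $F$ and $F'$ are $i$-faces over $G$ sharing a vertex, hence equal by Definition \ref{cov}(3). For surjectivity, given a vertex $w_0$ over $v_0$ I lift the flag $\{v_0\}\subset\{v_0,v_1\}\subset\cdots\subset G$ through $K$: set $\bar F_0=\{w_0\}$, and inductively, given $\bar F_j\in S_j(K)$ with $\phi(\bar F_j)=\{v_0,\dots,v_j\}$ and $w_0\in\bar F_j$, apply Lemma \ref{localinc} with that value of $j$ to $\bar F_j$ and to the $(j+1)$-face $\{v_0,\dots,v_{j+1}\}$ of $M$ (which has $\{v_0,\dots,v_j\}$ in its boundary), obtaining the unique $\bar F_{j+1}\in S_{j+1}(K)$ with $\bar F_j\in\p\bar F_{j+1}$ and $\phi(\bar F_{j+1})=\{v_0,\dots,v_{j+1}\}$; note $w_0\in\bar F_j\subset\bar F_{j+1}$. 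After $i$ steps, $F:=\bar F_i$ is an $i$-face over $G$ with $\Phi(F)=w_0$. Hence $\Phi$ is a bijection and (ii) follows.

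The step carrying genuine content is the lift of the whole flag in (ii): it upgrades the single ``one dimension up'' consequence of the strong covering axiom recorded in Lemma \ref{localinc} to a simultaneous lift of $G$ and all of its faces through $K$, anchored at an arbitrary vertex of the fiber over $v_0$. Everything else — the bijection between edge-fibers and vertex-fibers in (i), the two injectivity statements, and the propagation of a constant fiber size along the connected graph $K^{(1)}\setminus\{\emptyset\}$ — is routine from Definition \ref{cov}(3) and the connectedness of $K$.
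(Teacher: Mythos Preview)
The paper does not supply its own proof of this lemma; it is quoted from \cite{HJ13B} and used as a black box, so there is no argument in the paper to compare against. Your proof is correct and is essentially the standard one: use the strong covering axiom (packaged here as Lemma~\ref{localinc}) to show that fibre sizes are locally constant along incidences, and then invoke connectedness to globalize.

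One small point worth tightening: the reduction ``replace $M$ by $\phi(K)$'' changes the conclusion, since the lemma as stated quantifies over all $G\in S_i(M)$, not just those in the image. The cleaner way to handle this is to observe that a strong covering is automatically surjective onto the connected component of $M$ meeting $\phi(K)$: if a vertex $v$ of $M$ has nonempty fibre and $\{u,v\}\in S_1(M)$, then Lemma~\ref{localinc} with $i=0$ forces $u$ to have nonempty fibre as well, and this propagates along $M^{(1)}$. In practice one tacitly assumes $M$ connected (as the paper does throughout when speaking of $k$-fold coverings), and then your claims (i) and (ii) finish the job exactly as written.
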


The constant $k$ in Lemma \ref{covdeg} is called the \emph{degree of the covering}, and
$K$ is accordingly termed a \emph{$k$-fold covering} of $M$.

We now turn to the graph coverings.
Let $G$ be a simple graph with vertex set $V(G)$ and edge set $E(G)$.
The neighborhood of a vertex $v \in V(G)$ is denoted and defined by
$N_G(v)=\{u: \{u,v\} \in E(G)\}$.

\begin{defi}
Let $\bar{G}$ and $G$ be simple graphs, where $G$ is connected.
A surjective map $\phi: V(\bar{G}) \to V(G)$ is called a \emph{covering map} if

(1)  $\phi$ is a homomorphism,  namely, $\phi(e) \in E(G)$ for each $e \in E(\bar{G})$,

(2) for each vertex $v \in V(G)$ and each vertex $\bar{v} \in \phi^{-1}(v)$, the restriction $\phi|_{N_{\bar{G}}(\bar{v})}: N_{\bar{G}}(\bar{v}) \to N_G(v)$ is a bijection.
\end{defi}

This definition is standard in literature \cite{Ami, Sta}.
Notably, a graph covering (with $\bar{G}$ connected) coincides with the strong covering of the $1$-skeleton of a complex.
 Gross and Tucker \cite{GT} established a relationship between $k$-fold coverings and derived graphs (see Section \ref{drv} for the notion of derived graphs).

\begin{lem}\cite{GT}\label{per}
Let $\bar{G}$ be a $k$-fold covering of a graph $G$. Then there exists an assignment $\psi$ of permutations in $\mathbb{S}_k$ to the edges of $G$ such that $G^\psi$ is isomorphic to $\bar{G}$.
\end{lem}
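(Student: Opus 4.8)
\emph{The plan} is to build the voltage assignment $\psi$ directly from a choice of labelling of the fibres of the covering map $\phi\colon V(\bar G)\to V(G)$, and then to verify that the resulting derived graph is isomorphic to $\bar G$ via the obvious ``fibre coordinate'' map. The first point is to record, for each edge $\{u,v\}\in E(G)$, a canonical bijection between the fibres over $u$ and over $v$: for $\bar u\in\phi^{-1}(u)$ the local bijection $\phi|_{N_{\bar G}(\bar u)}\colon N_{\bar G}(\bar u)\to N_G(u)$ has a unique preimage of $v$, which I call $\theta_{uv}(\bar u)$, and which lies in $\phi^{-1}(v)$; symmetrically one has $\theta_{vu}$. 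Since $\{\bar u,\theta_{uv}(\bar u)\}$ is at once the unique edge of $\bar G$ over $\{u,v\}$ at $\bar u$ and the unique such edge at $\theta_{uv}(\bar u)$, one gets $\theta_{vu}\circ\theta_{uv}=\mathrm{id}$ and $\theta_{uv}\circ\theta_{vu}=\mathrm{id}$, so $\theta_{uv}\colon\phi^{-1}(u)\to\phi^{-1}(v)$ is a bijection. As $G$ is connected, all fibres share a common cardinality, namely the integer $k$ of the statement.

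Next I would fix, for each $v\in V(G)$, a bijection $\lambda_v\colon\phi^{-1}(v)\to\{v\}\times[k]$, and write $\lambda_v(\bar v)=(v,\ell_v(\bar v))$. For each arc $(u,v)$ of $\overleftrightarrow{G}$ define $\psi(u,v)\in\mathbb{S}_k$ by $\psi(u,v)(j)=\ell_u\big(\theta_{vu}(\lambda_v^{-1}(v,j))\big)$; in words, $\psi(u,v)$ tells which vertex over $u$ is joined in $\bar G$ to the $j$-th vertex over $v$. A one-line computation using $\theta_{uv}\circ\theta_{vu}=\mathrm{id}$ gives $\psi(v,u)=\psi(u,v)^{-1}$, so $\psi$ satisfies the compatibility condition $\psi(e)^{-1}=\psi(e^{-1})$ and $\overleftrightarrow{G}^\psi$ is legitimately the simple graph $G^\psi$.

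It then remains to check that $\Phi\colon V(G^\psi)=V(G)\times[k]\to V(\bar G)$, $\Phi(v,j)=\lambda_v^{-1}(v,j)$, is a graph isomorphism. It is a bijection on vertex sets because $V(\bar G)$ is the disjoint union of the fibres and each $\lambda_v$ is a bijection. By the definition of the derived graph, $\{(u,i),(v,j)\}$ is an edge of $G^\psi$ iff $\{u,v\}\in E(G)$ and $i=\psi(u,v)(j)$, which by the previous paragraph means exactly $\lambda_u^{-1}(u,i)=\theta_{vu}(\lambda_v^{-1}(v,j))$, i.e. that $\Phi(u,i)$ is joined to $\Phi(v,j)$ by the unique edge of $\bar G$ over $\{u,v\}$ at $\Phi(v,j)$; conversely, since $\phi$ is a homomorphism every edge of $\bar G$ lies over a unique edge of $G$ and has this form. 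Thus $\Phi$ is an isomorphism $G^\psi\to\bar G$, and moreover $\phi\circ\Phi$ is the projection $V(G)\times[k]\to V(G)$, the fibre-respecting refinement that the paper actually uses downstream.

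There is no deep step here; the content is entirely bookkeeping. The one place needing care is the orientation convention: since the paper's arc condition in $D^\psi$ is $i=\psi(u,v)(j)$, one must define $\psi$ on \emph{all} arcs of $\overleftrightarrow{G}$ (not one per edge) and check $\psi(v,u)=\psi(u,v)^{-1}$, so that the edge relation of $G^\psi$ is symmetric and $\Phi$ respects it in both directions. Once that is pinned down, the ``locally bijective'' axiom in the definition of a graph covering is precisely what makes the fibre-coordinate map an isomorphism.
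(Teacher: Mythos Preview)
Your proof is correct and is precisely the standard Gross--Tucker construction; the paper does not give its own proof of this lemma but simply cites \cite[Theorem 2]{GT}, whose argument is the one you have written out. You also explicitly verify $\phi\circ\Phi=p$, which is exactly the refinement recorded in Remark~\ref{rmkper} and used downstream.
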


\begin{rmk}\label{rmkper}
In Lemma \ref{per}, let $\eta: V(G^\psi) \to V(\bar{G})$ be the isomorphism, $\phi: V(\bar{G}) \to V(G)$  the covering map,
and $p: V(G^\psi) \to V(G)$ the natural projection with $p(v,i)=v$ for all $i \in [k]$.
By the proof of Lemma \ref{per} (see \cite[Theorem 2]{GT}), we have
$$ \phi \eta = p,$$
or equivalently, for all $v \in V(G)$ and $i \in [k]$,
$$ \phi \eta (v,i)= v.$$
\end{rmk}

\begin{proof}[Proof of Lemma \ref{perCom}]
By the definition of strong covering,
$\phi$ maps $S_i(K)$ to $S_i(M)$ and $S_{i+1}(K)$ to $S_{i+1}(M)$.
Consequently, $\phi$ induces a surjective $k$-to-$1$ map (also denoted by $\phi$) from the vertex set of $B_i(K)$ to the vertex set of $B_i(M)$.
If $F \in \p \bar{F}$, then $\phi(F) \in \p \phi(\bar{F})$,
meaning $\phi$ maps the edges of $B_i(K)$ to the edges of $B_i(M)$.
By Lemma \ref{localinc}, for each $G \in S_i(M)$ and $F \in \phi^{-1}(G)$,  the restriction $\phi|_{N_{B_i(K)}(F)}: N_{B_i(K)}(F) \to N_{B_i(M)}(G)$ is bijective.
For each $\bar{G} \in S_{i+1}(M)$ and $\bar{F} \in \phi^{-1}(\bar{G})$,
note that $N_{B_i(K)}(\bar{F})=\p \bar{F}$ and $N_{B_i(M)}(\bar{G})=\p \bar{G}$.
Since $\phi|_{\bar{F}}: \bar{F} \to \bar{G}$ is a bijection,
the restriction $\phi|_{N_{B_i(K)}(\bar{F})}: N_{B_i(K)}(\bar{F}) \to N_{B_i(M)}(\bar{G})$  is also a bijection.
Thus, $\phi$ induces a  $k$-fold covering from $B_i(K)$ to $B_i(M)$.

By Lemma \ref{per},
there exists a mapping $\psi: E(B_i(M)) \to \mathbb{S}_k$ such that $B_i(M)^\psi$ is isomorphic to $B_i(K)$.
Let $\eta: B_i(M)^\psi \to B_i(K)$ be the isomorphism.
For any $(F,i) \in S_i(M) \times [k]$,
$\eta(F,i) \in S_i(K)$; otherwise, if $\eta(F,i) \in S_{i+1}(K)$,
then $\phi \eta(F,i) \in S_{i+1}(M)$, a contradiction to
Remark \ref{rmkper} (since $\phi \eta(F,i) = p(F,i) =F \in S_i(M)$).
So $\eta$ sends $S_i(M) \times [k]$ to $S_i(K)$.
A similar argument shows that $\eta$ maps $S_{i+1}(M) \times [k]$ to $S_{i+1}(K)$.
\end{proof}

We give an example to illustrate Lemma \ref{perCom} as follows.

\begin{exm}\label{exmcov}
Let $K,M$ be the complexes depicted in Fig. \ref{KL}, where the vertex set of each triangle represents a $2$-simplex.
It is easy to verify that  $K$ is a $2$-fold covering of $M$.

\begin{figure}[h]
\centering
\includegraphics[scale=.75]{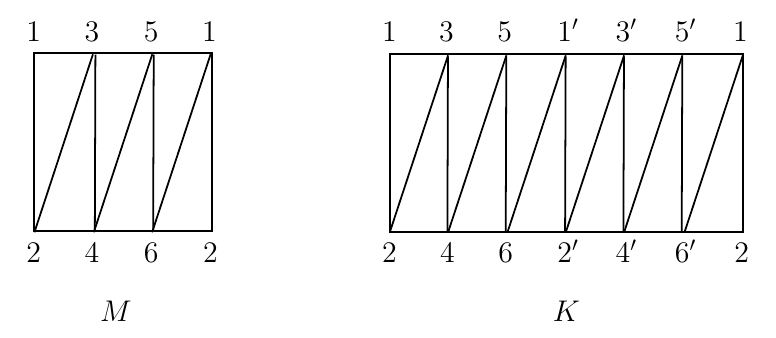}~~~
\vspace{-3mm}
\caption{\small A complex $M$ and its $2$-fold covering $K$}\label{KL}
\end{figure}

\begin{figure}[h]
\centering
\includegraphics[scale=.8]{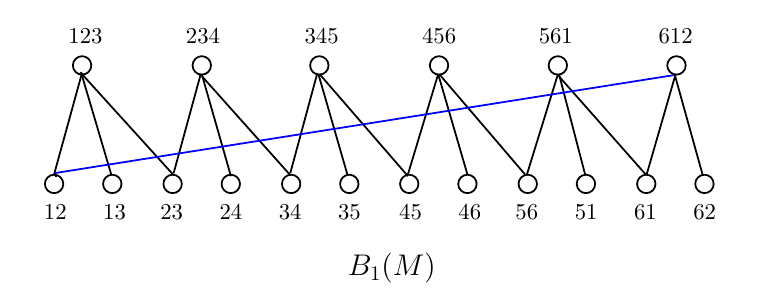}~~~
\vspace{-3mm}
\caption{\small The incidence graph $B_1(M)$}\label{B1L}
\end{figure}

\begin{figure}[h]
\centering
\includegraphics[scale=.8]{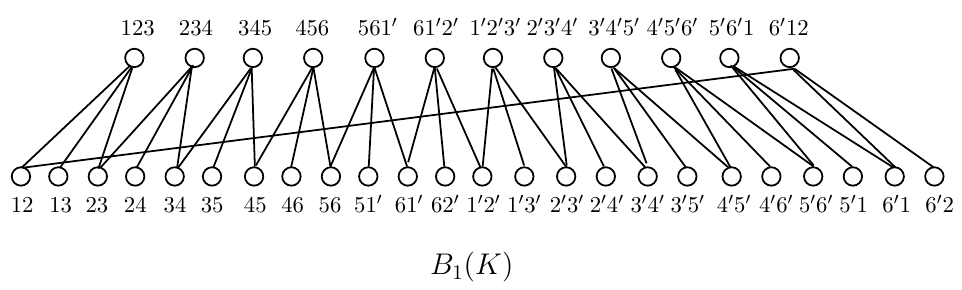}~~~
\vspace{-3mm}
\caption{\small The incidence graph $B_1(K)$}\label{B1K}
\end{figure}

The incidence graphs $B_1(M)$ and $B_1(K)$ are shown in Fig. \ref{B1L} and
Fig. \ref{B1K} respectively. Here, an edge $\{a,b\}$ is abbreviated as $ab$ and a face $\{a,b,c\}$ is abbreviated as $abc$.
Define a permutation assignment $\psi: E(B_1(M)) \to \mathbb{S}_2$ such that $\psi(12, 612)=(1\ 2)$ (see the blue edge in Fig. \ref{B1L}) and $\phi(F,\bar{F})=1$ for all other incidences $(F, \bar{F})$ with $F \in \p \bar{F}$.
The derived graph $B_1(M)^\psi$ is displayed in Fig. \ref{B1Lpsi}, where hollow circles represent vertices $(abc,1)$ or $(ab,1)$, and solid circled represent vertices $(def,2)$ or $(de,2)$.
It is readily seen there exists an isomorphism $\eta: B_1(M)^\psi \to B_1(K)$ which sends $S_1(M) \times \{1,2\}$ to $S_1(K)$ and $S_2(M) \times \{1,2\}$ to $S_2(K)$.

\begin{figure}[h]
\centering
\includegraphics[scale=.8]{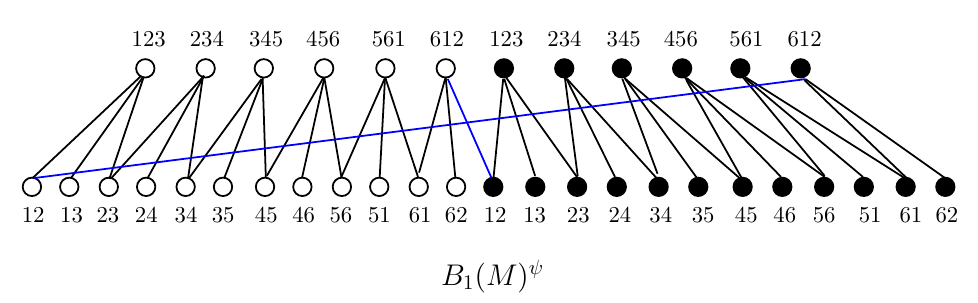}~~~
\vspace{-3mm}
\caption{\small The derived graph $B_1(M)^\psi$}\label{B1Lpsi}
\end{figure}

\end{exm}

\section{Spectra of covering complexes}\label{sec4}
In this section we provide the proof of the main results stated in Section \ref{main}.
We also establish the counterpart results for the $i$-down Laplace operators and
consider the special case that the group generated by permutation assignment is abelian.

Let $\phi: K \to M$ be a $k$-fold covering.
By Lemma \ref{perCom}, $B_i(K)$ is a $k$-fold covering of $B_i(M)$,
and there exists a permutation assignment $\psi$ on $B_i(M)$ such that $B_i(M)^\psi$ is isomorphic to $B_i(K)$ via a mapping $\eta$ that sends $S_i(M) \times [k]$ to $S_i(K)$ and $S_{i+1}(M) \times [k]$ to $S_{i+1}(K)$.

Recall that $\L_i^{\up}(K)=W_i(K)^{-1} D_i(K)^\top W_{i+1}(K) D_i(K)$.
By the isomorphism $\eta: B_i(M)^\psi \to B_i(K)$, as discussed in Section \ref{Rcob}, we can relabel the rows and columns of $D_i(K)$, $W_i(K)$ and $W_{i+1}(K)$ such that
\begin{equation}\label{DiLp}
\begin{split} & D_i(K)_{(\bar{G},l),(G,j)}
=\sgn([\eta(G,j)], \p [\eta(\bar{G},l)]),\\
& W_i(K)_{(G,j),(G,j)}=w(\eta(G,j)),
W_{i+1}(K)_{(\bar{G},l),(\bar{G},l)}=w(\eta(\bar{G},l)).
\end{split}
\end{equation}

We require  a result on the sign relationship under covering map $\phi$.
Let $F, \bar{F} \in K$ with $F \in \p \bar{F}$.
By Formula (3.5) in \cite{HJ13B},
\begin{equation}\label{sgncov} \sgn([\phi(F)], \p [\phi(\bar{F}))=
\sgn([F], \p [\bar{F}]) \sgn([F], [\phi(F)]) \sgn ([\bar{F}], [\phi(\bar{F})]),
\end{equation}
where, if $[F]=[v_0,\ldots,v_i]$, then
\begin{equation}\label{sgn2}
\sgn([F], [\phi(F)]): =
\begin{cases} 1,& \text{if~} [\phi(v_0),\ldots,\phi(v_i)] \text{~is positively oriented}, \\
-1, & \text{otherwise}.
\end{cases}
\end{equation}

\begin{proof}[Proof of Lemma \ref{F-DiK}]
By \eqref{DiLp}, $D_i(K)_{(\bar{G},l),(G,j)}=\sgn([\eta(G,j)], \p [\eta(\bar{G},l)])$.
Note that $\eta(G,j) \in \p \eta(\bar{G},l)$ (equivalently $\{\eta(G,j),\eta(\bar{G},l)\} \in E(B_i(K))$ by definition) if and only if
$\{(G,j), (\bar{G},l)\} \in E(B_i(M)^\psi)$ (by the isomorphism $\eta$), which holds if and only if $G \in \p \bar{G}$ and
$l=\psi(\bar{G}, G)(j)$ by definition.
Thus, by  (\ref{sgncov}), we have
\begin{equation}\label{signrel}
\begin{split}
\sgn([\eta(G,j)], \p [\eta(\bar{G},l)]) & = \sgn ([\phi\eta(G,j)], \p [\phi\eta(\bar{G},l)]) \sgn([\eta(G,j)], [\phi\eta(G,j)]) \\
& ~~~ \cdot \sgn([\eta(\bar{G},l)], [\phi\eta(\bar{G},l)])\\
&= \sgn([G], \p [\bar{G}]) \sgn([\eta(G,j)], [G])
 \sgn([\eta(\bar{G},l)], [\bar{G}]),
 \end{split}
\end{equation}
where the second equality follows from Remark \ref{rmkper}, which gives
$\phi\eta(\bar{G},l)=p(\bar{G},l)=\bar{G}$ and $\phi\eta({G},j)=p({G},j)=G$.

Let $\Lambda_{i+1}(M)$ be defined as in \eqref{Lmdi1p}, i.e.,
$$\Lambda_{i+1}(M)^\eta_{(\bar{G},l),(\bar{G},l)}=\sgn([\eta(\bar{G},l)], [\bar{G}]),$$
and let $\Lambda_i(M)^\eta$ be defined as in \eqref{Lmdip}, i.e.,
$$ \Lambda_i(M)^\eta_{(G,j),(G,j)}=\sgn([\eta(G,j)], [G]). $$
Let $ D_i(M)^\psi$ be defined as in \eqref{tdilp}, i.e.,
$$
 D_i(M)^\psi_{(\bar{G},l),(G,j)}=\left\{
\begin{array}{ll}
\sgn([G], \p [\bar{G}]), & \mbox{if~} G \in \p \bar{G}, l = \psi(\bar{G},G)(j),\\
0, & \mbox{else}.
\end{array}
\right.
$$
Then we conclude that
$$ D_i(K)=\Lambda_{i+1}(M)^\eta D_i(M)^\psi \Lambda_i(M)^\eta.$$
\end{proof}

\subsection{Proof of main results}
\begin{proof}[Proof of Theorem \ref{mainthm}]
By  Lemma \ref{F-DiK}, we have
\begin{align*}
 \L^{\up}_i(K)& = W_i(K)^{-1}  D_i(K)^\top W_{i+1}(K) D_i(K)\\
 &= W_i(K)^{-1} (\Lambda_i(M)^\eta)^\top (D_i(M)^\psi)^\top (\Lambda_{i+1}(M)^\eta)^\top W_{i+1}(K)  \Lambda_{i+1}(M)^\eta D_i(M)^\psi \Lambda_i(M)^\eta.
\end{align*}
Thus, the matrix
$\tilde{\L}^{\up}_i(K):=\Lambda_i(M)^\eta \L^{up}_i(K) (\Lambda_i(M)^\eta)^{-1}$ takes the form:
$$ \tilde{\L}^{\up}_i(K)=\Lambda_i(M)^\eta W_i(K)^{-1} (\Lambda_i(M)^\eta)^\top (D_i(M)^\psi)^\top (\Lambda_{i+1}(M)^\eta)^\top W_{i+1}(K) \Lambda_{i+1}(M)^\eta D_i(M)^\psi.$$
Since $\Lambda_i(M)^\eta,\Lambda_{i+1}(M)^\eta$ are signature matrices (diagonal matrices with $\pm 1$ on the diagonal) and $W_i(K),W_{i+1}(K)$ are diagonal matrices, we have
$$
\tilde{\L}_i^{\up}(K)  = W_i(K)^{-1} (D_i(M)^\psi)^\top  W_{i+1}(K) D_i(M)^\psi.$$

By (\ref{DiLp}), the weight assumption and Remark \ref{rmkper},  $$W_i(K)_{(G,j),(G,j)}=w(\eta(G,j))=w(\phi\eta(G,j))=w(G),$$
and similarly $W_{i+1}(K)_{(\bar{G},l),(\bar{G},l)}=w(\bar{G})$.
So
\begin{equation}\label{kron}
W_i(K)=W_i(M) \otimes I_k, ~ W_{i+1}(K)=W_{i+1}(M) \otimes I_k.
\end{equation}

Let $\Psi_i$ be the subgroup of $\mathbb{S}_k$ generated by the permutations assigned by $\psi$ as defined in \eqref{Psi}.
Let $\varrho: \Psi_i \to \text{GL}_k(\mathbb{C})$ denote the permutation representation $g \mapsto P^g=(p_{ij}^g)$ (defined in \eqref{PRe}).
For matrices $D^g_i(M)$ as in \eqref{Dg}, we recall the decomposition (\ref{Dsum}):
\begin{equation}\label{Dsum2}
 D_i(M)^\psi=\sum_{g \in \Psi_i} D_i^g(M) \otimes P^g.
 \end{equation}

Combining  (\ref{kron}) and \eqref{Dsum2}, we have
\begin{align*}
\tilde{\L}_i^{\up}(K)& =(W_i(M)^{-1} \otimes I_k )\bigg(\sum_{g \in \Psi_i} D_i^g(M)^\top \otimes P_g^\top\bigg)  (W_{i+1}(M) \otimes I_k) \bigg(\sum_{g \in \Psi_i}D_i^g(M) \otimes P_g \bigg)\\
& = \sum_{g,g'\in \Psi_i} \left(W_i(M)^{-1} D_i^g(M)^\top W_{i+1}(M) D_i^{g'}(M)\right) \otimes \left((P^g)^\top P^{g'}\right).
\end{align*}
Let $\varrho_1,\varrho_2,\ldots,\varrho_t$ be all irreducible sub-representations of $\varrho$, where $\varrho_1=1$ is the identity representation of degree one.
By (\ref{rep}), there exists an invertible matrix  $T$ such that for all $g \in \Psi_i$,
$$T^{-1} P^g T= I_1 \oplus \varrho_2(g) \oplus \cdots \oplus \varrho_t(g).
$$
Note that $(P^g)^\top = P^{g^{-1}}$.
So we have
\begin{align*}
(I \otimes T)^{-1} \tilde{\L}^{\up}(K) (I \otimes T)
&= \sum_{g,g'\in \Psi_i} \left(W_i(M)^{-1} D_i^g(M)^\top W_{i+1}(M) D_i^{g'}(M)\right) \otimes \left(T^{-1}(P^g)^\top P^{g'}T\right)\\
 &= \sum_{g,g'\in \Psi_i} \left(W_i(M)^{-1} D_i^g(M)^\top W_{i+1}(M) D_i^{g'}(M)\right) \otimes
  \left(I_1 \oplus \bigoplus_{j=2}^t \varrho_j(g^{-1})\varrho_j(g')\right)\\
&=\sum_{g,g'\in \Psi_i} \left(W_i(M)^{-1} D_i^g(M)^\top W_{i+1}(M) D_i^{g'}(M)\right)\\
& ~~~ \oplus   \sum_{g,g'\in \Psi_i} \left(W_i(M)^{-1} D_i^g(M)^\top W_{i+1}(M) D_i^{g'}(M)\right)
 \otimes \left(\bigoplus_{j=2}^t \varrho_j(g^{-1})\varrho_j(g')\right)\\
 &= W_i(M)^{-1} \bigg(\sum_{g\in \Psi_i} D_i^g(M)\bigg)^\top W_{i+1}(M) \bigg(\sum_{g\in \Psi_i} D_i^g(M) \bigg) \\
& ~~~ \oplus \bigoplus_{j=2}^t \sum_{g,g'\in \Psi_i} \left(W_i(M)^{-1} D_i^g(M)^\top W_{i+1}(M) D_i^{g'}(M)\right) \otimes \varrho_j(g^{-1})\varrho_j(g').
\end{align*}
Set $X:=(I \otimes T)^{-1} \tilde{\L}^{up}(K) (I \otimes T)$ temporarily.
Note that $D_i(M)=\sum_{g \in \Psi_i} D_i^g(M)$ by (\ref{decompL}).
So we have
\begin{equation}\label{decomp2}
\begin{split}
X &=
  W_i(M)^{-1} D_i(M)^\top W_{i+1}(M) D_i(M) \\
& ~~~ \oplus \bigoplus_{j=2}^t  \bigg(\! \sum_{g\in \Psi_i} (W_i(M)^{-1} D_i^g(M)^\top) \otimes \varrho_j(g^{-1}) \!\bigg)\! \bigg(\!\sum_{g\in \Psi_i} (W_{i+1}(M)D_i^g(M)) \otimes \varrho_j(g)\! \bigg)\\
& = \L_i^{\up}(M) \\
& ~~~\oplus \bigoplus_{j=2}^t \!\bigg(\! \sum_{g\in \Psi_i} (W_i(M)^{-1} D_i^g(M)^\top) \otimes \varrho_j(g^{-1}) \!\bigg) \! \bigg(\!\sum_{g\in \Psi_i} (W_{i+1}(M)D_i^g(M)) \otimes \varrho_j(g)\!\bigg).
\end{split}
\end{equation}
This completes the proof.
\end{proof}

By Theorem \ref{mainthm}, if we know all irreducible sub-representations $\varrho_j$ of $\Psi_i$, then we will get all remaining eigenvalues of the covering complex $K$.
In particular, we get the spectral inclusion theorem (Theorem \ref{inclusion}) established by Horak and Jost \cite{HJ13B}.

\begin{proof}[Proof of Theorem \ref{inclusion}]
For the combinatorial Laplace operators $L_i^{up}(M)$ and $L_i^{up}(K)$, the weights satisfy  $w_K(F)=w_M(\phi(F))=1$ for all $F \in K$.
For the normalized Laplace operators $\Delta_i^{up}(M)$ and $\Delta_i^{up}(K)$,
by Lemma \ref{localinc},
for each $F \in K$, if $F$ is a facet, so is $\phi(F)$, and  $w_K(F)=w_M(\phi(F))=1$;
for non-facets $F$, we still have $w_K(F)=w_M(\phi(F))$ by induction.
So the result follows by Theorem \ref{mainthm}.
\end{proof}

Next, by Theorem \ref{mainthm}, we get the spectral union property for $2$-fold covering complexes,  which extends Bilu and Linial's work \cite{Bilu} from graphs to complexes.

\begin{proof}[Proof of Theorem \ref{SpecUnion}]
By Lemma \ref{perCom}, there exists a permutation assignment $\psi:E(B_i(M)) \to \mathbb{S}_2$ such that $B_i(M)^\psi$ is isomorphic to $B_i(K)$.
Let $\Psi_i=\langle \text{image}(\psi) \rangle \le \mathbb{S}_2$ as defined in (\ref{Psi}).
As $K$ is connected, so is $B_i(K)$ or $B_i(M)^\psi$.
If $\Psi_i=1$, by definition $B_i(M)^\psi$ is a union of two disjoint copies of $B_i(M)$; a contradiction.
Thus, $\Psi_i=\mathbb{S}_2$.

It is known $\mathbb{S}_2$ has two irreducible representations: the trivial representation $\varrho_1=1$ and the sign representation $\varrho_2=\sgn$.
There exists an invertible matrix $T$ such that for each $g \in \mathbb{S}_2$,
$$ T^{-1} P^g T =I_1 \oplus \sgn g.$$
So, by Eq. (\ref{decomp2}), noting that $\sgn g^{-1} =\sgn g$,
$$(I \otimes T)^{-1} \tilde{\L}^{\up}(K) (I \otimes T)=
 \L_i^{\up}(M) \oplus W_i(M)^{-1} \bigg(\sum_{g\in \Psi_i} D_i^g(M) \sgn g \bigg)^\top W_{i+1}(M) \bigg(\sum_{g\in \Psi_i} D_i^g(M) \sgn g\bigg).$$

Define an incidence-signed complex $(M,s)$ with $s(F, \bar{F})=\sgn \psi(F, \bar{F})$ for $F \in \p \bar{F}$.
 Then, similar to the discussion for (\ref{Dsum}), we get the matrix $D_i^s(M)$ of $\d_i^s$ on $(M,s)$, namely,
 $$D_i^s(M)= \sum_{g \in \Psi_i} D_i^g(M) \sgn g.$$
 So, by Eq. (\ref{LiupS}),
$$
(I \otimes T)^{-1} \tilde{\L}^{\up}(K) (I \otimes T)=
 \L_i^{\up}(M) \oplus W_i(M)^{-1} (D_i^s(M))^\top W_{i+1}(M) D_i^s(M)
 = \L_i^{\up}(M) \oplus  \L_i^{\up}(M,s).
$$
The result follows since $ \tilde{\L}^{\up}(K)=\Lambda_i(M)^\eta \L^{\up}_i(K) (\Lambda_i(M)^\eta)^{-1}$ as denoted in the proof of Theorem \ref{mainthm}.
\end{proof}

We give a remark for Theorem \ref{SpecUnion} on the connectedness of $K$.
By the definition of strong covering $\phi: K \to M$, $K$ is required to be connected, which forces $M$ must be connected.
If we remove the condition of $K$ being connected, then $\Psi_i$ may be trivial ($\Psi_i=\{1\}$).
Then $\sgn g =1$ for all  $g \in \Psi_i$.
So, $(M,s)=M$, and
$\spec \L_i^{\up}(K) = \spec \L_i^{\up}(M) \cup \spec \L_i^{\up}(M)$, preserving the union property.

Below, we provide an example to illustrate Theorem \ref{SpecUnion}.

\begin{exm}
Let $K,M$ be the complexes in Fig. \ref{KL}.
The spectrum of $L_1^{\up}(M)$ is
$$ \spec L_1^{\up}(M)=\{5, 4^{(2)}, 2^{(2)}, 1, 0^{(6)}\},$$
and the spectrum of $L_1^{\up}(K)$ is
$$ \spec L_1^{\up}(K)=\{5,4^{(2)},3^{(2)},2^{(2)},1, (3+\sqrt{3})^{(2)}, (3-\sqrt{3})^{(2)},0^{(12)}\},$$
where $\la^{(m)}$ denotes eigenvalue $\la$ with multiplicity $m$.

From Example \ref{exmcov}, let $\psi: E(B_1(M)) \to \mathbb{S}_2$ be such that $\psi(12, 612)=(1\ 2)$  and $\phi(F,\bar{F})=1$ for all other incidences $(F, \bar{F})$ with $F \in \p \bar{F}$.
Then $B_1(M)^\psi$ is isomorphic to $B_1(K)$.
Define an incidence-signed complex $(M,s)$ such that $s(F, \bar{F})=\sgn \psi(F, \bar{F})$ for each incidence $(F,  \bar{F})$ with $F \in \p \bar{F}$.
So we have $s(12, 612)=-1$ and $s(F,  \bar{F})=1$ for other incidences $(F,  \bar{F})$.
The matrix $D_1^s$ of $\d_1^s$ on $(M,s)$ is obtained from the matrix $D_1$ of $\d_1$ on $M$ only by replacing the $([612]^*,[12]^*)$-entry $1$ by $-1$.
The spectrum of $L_1^{\up}(M,s)$ is
$$\spec L_1^{\up}(M,s)=\{3^{(2)}, (3+\sqrt{3})^{(2)}, (3-\sqrt{3})^{(2)},0^{(6)}\}.$$
So, as multiset union,
$$ \spec L_1^{\up}(K)=\spec L_1^{\up}(M) \cup \spec L_1^{\up}(M,s).$$
\end{exm}

\subsection{Abelian group case}
We consider a special case where the group $\Psi_i$ defined in (\ref{Psi}) is  abelian.
This can be considered as a generalization of Theorem \ref{SpecUnion} as $\mathbb{S}_2$ is abelian.

\begin{thm}\label{Abelcov}
Let $\phi: K \to M$ be a $k$-fold covering map, and  let $\psi: E(B_i(M) \to \mathbb{S}_k$ be the permutation assignment such that $B_i(M)^\psi$ is isomorphic to $B_i(K)$.
Suppose that $w_K(F)=w_M(\phi(F))$ for all $F \in K$.
Let $\Psi_i$ be the group defined in (\ref{Psi}), which is abelian.
Then $\Psi_i$ has exactly $k$ elements, and the permutation representation $\varrho$ of $\Psi$ decomposes into irreducible sub-representations of degree one as follows:
\begin{equation}\label{repabl} \varrho=I_1 \oplus \varrho_2 \oplus \cdots \oplus \varrho_k,
\end{equation}
and
$$ \spec \L_i^{\up}(K))= \spec\L_i^{\up}(M) \cup \bigcup_{j=2}^k \spec\L_i^{\up}(M,\omg_j),$$
where $\omg_j(F,\bar{F})=\varrho_j(\psi(F,\bar{F}))$ for
 $(F,  \bar{F}) \in S_i(M) \times S_{i+1}(M)$ such that $F \in \p \bar{F}$.
\end{thm}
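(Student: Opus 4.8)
The plan is to apply Theorem \ref{mainthm} and then use the special structure of irreducible representations of a finite abelian group to simplify each summand. First I would establish that $|\Psi_i| = k$. Since $\Psi_i \le \mathbb{S}_k$ acts on $[k]$, and $B_i(M)^\psi \cong B_i(K)$ is connected (because $K$ is connected, hence $B_i(K)$ is connected), the action of $\Psi_i$ on $[k]$ must be transitive: if there were two orbits, the derived graph would split into two components that do not communicate, contradicting connectedness. A transitive abelian group acting on a set of size $k$ must act regularly (the point stabilizer is normal, hence trivial if it fixes one point and is shared by all points in a transitive abelian action), so $|\Psi_i| = k$. Since $\Psi_i$ is abelian of order $k$, all its complex irreducible representations are one-dimensional, and there are exactly $k$ of them; this gives the decomposition \eqref{repabl} with $\varrho_1 = I_1$ the trivial one and $\varrho_2,\ldots,\varrho_k$ the remaining linear characters.

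Next I would plug this into the conclusion of Theorem \ref{mainthm}. For each $j \in \{2,\ldots,k\}$, the $j$-th summand is
\[
\Big(\sum_{g\in\Psi_i}\big(W_i(M)^{-1}(D_i^g(M))^\top\big)\otimes \varrho_j(g^{-1})\Big)\Big(\sum_{g\in\Psi_i}\big(W_{i+1}(M)D_i^g(M)\big)\otimes\varrho_j(g)\Big).
\]
Because $\varrho_j$ is one-dimensional, each $\varrho_j(g)$ is a scalar in $\C^*$, so the Kronecker products collapse: $\sum_{g}\big(W_{i+1}(M)D_i^g(M)\big)\otimes\varrho_j(g) = W_{i+1}(M)\big(\sum_g \varrho_j(g)D_i^g(M)\big)$, and similarly $\varrho_j(g^{-1}) = \overline{\varrho_j(g)}$ since linear characters of a finite group are unitary. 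I would then recognize $\sum_g \varrho_j(g) D_i^g(M)$ as exactly the matrix $D_i^{\omg_j}(M)$ of the weighted coboundary map $\d_i^{\omg_j}$ on the incidence-weighted complex $(M,\omg_j)$ with $\omg_j(F,\bar F) = \varrho_j(\psi(F,\bar F))$: indeed, by \eqref{Dg} the $(\bar G, G)$-entry of $\sum_g \varrho_j(g)D_i^g(M)$ is $\sgn([G],\p[\bar G])\,\varrho_j(\psi(G,\bar G))$ when $G\in\p\bar G$ and $0$ otherwise, which matches the formula for $D_i^{\omg_j}(M)$. Hence the $j$-th summand equals $W_i(M)^{-1}(D_i^{\omg_j}(M))^\star W_{i+1}(M) D_i^{\omg_j}(M) = \L_i^{\up}(M,\omg_j)$ by \eqref{Liupw}.

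Finally, since conjugation by the invertible matrix $(I\otimes T)$ and by the signature matrix $\Lambda_i(M)^\eta$ preserves the spectrum, the similarity in Theorem \ref{mainthm} gives
\[
\spec\L_i^{\up}(K) = \spec\L_i^{\up}(M)\ \cup\ \bigcup_{j=2}^k \spec\L_i^{\up}(M,\omg_j)
\]
as a multiset union. I expect the main obstacle to be the careful bookkeeping in the identification $\sum_g \varrho_j(g)D_i^g(M) = D_i^{\omg_j}(M)$: one must check that $\psi(G,\bar G)$ is well-defined on incidences and that the weighting $\omg_j$ lands in $\C^*$ (which holds since characters are nonzero), and one should double-check that the conjugate-transpose in \eqref{Liupw} matches the $\varrho_j(g^{-1})$ in Theorem \ref{mainthm} — i.e.\ that $\overline{\varrho_j(g)} = \varrho_j(g^{-1})$, which is where the unitarity of linear characters is used. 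The transitivity-implies-regularity argument for abelian groups should be stated cleanly but is standard.
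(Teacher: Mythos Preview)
Your proposal is correct and follows essentially the same route as the paper: establish transitivity of $\Psi_i$ on $[k]$ from connectedness of $B_i(M)^\psi$, deduce regularity (hence $|\Psi_i|=k$) from abelianness, note that all irreducibles are linear characters, and then specialize the decomposition of Theorem~\ref{mainthm} using $\varrho_j(g^{-1})=\overline{\varrho_j(g)}$ to identify each summand with $\L_i^{\up}(M,\omg_j)$ via \eqref{Liupw}. The paper cites an external reference for transitivity whereas you argue it directly, but otherwise the arguments coincide.
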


\begin{proof}
As $K$ is connected, so is $B_i(K)$ (and hence $B_i(M)^\psi$).
By Theorem 3.4 of \cite{Song}, $\Psi_i$ acts transitive on the set $[k]$ via  permutations because $B_i(M)^\psi$ is connected.
We claim that $\Psi_i$ acts faithfully on $[k]$.
Suppose there exists $g \in \Psi_i$ and $j \in [k]$ such that $g(j)=j$.
For any $\ell \in [k]$, transitivity implies that $\ell=h(j)$ for some $h \in \Psi_i$.
Since $\Psi_i$ is Abelian,
$$ g(\ell)=g  h(j)=hg(j)=h(j)=\ell,$$
which implies $g=1$.
By the orbit-stabilizer lemma, $\Psi_i$ has exactly $k$ elements.
As abelian groups has only degree-one irreducible representations,
we immediately obtain the decomposition (\ref{repabl}).
Moreover, there exists an invertible matrix $T$ such that
$$ T^{-1} P_g T=I_1 \oplus \varrho_2(g) \oplus \cdots \oplus \varrho_k(g).$$

By (\ref{decomp2}), noting that each $\varrho_j$ has degree one, we have
\begin{align*}
(I \otimes T)^{-1} \tilde{\L}^{\up}(K) (I \otimes T)
&=\L_i^{\up}(M) \oplus \bigoplus_{j=2}^k\bigg(\! \sum_{g\in \Psi_i} \! W_i(M)^{-1} D_i^g(M)^\top\varrho_j(g^{-1})\!\bigg) \! \bigg(\! \sum_{g\in \Psi_i}\! W_{i+1}(M) D_i^g(M)\varrho_j(g)\!\bigg)\\
&=\L_i^{\up}(M) \oplus \bigoplus_{j=2}^k W_i(M)^{-1} \bigg( \sum_{g\in \Psi_i} D_i^g(M)^\top\overline{\varrho_j(g)}\bigg) W_{i+1}(M) \bigg( \sum_{g\in \Psi_i} D_i^g(M)\varrho_j(g)\bigg)\\
&=\L_i^{\up}(M) \oplus \bigoplus_{j=2}^k W_i(M)^{-1} \bigg(\sum_{g\in \Psi_i} D_i^g(M)\varrho_j(g)\!\bigg)^\star W_{i+1}(M) \bigg(\sum_{g\in \Psi_i} D_i^g(M)\varrho_j(g)\!\bigg).
\end{align*}

For $j=2,\ldots,k$, let $(M, \omg_j)$ be an incidence-weighted complex such that $\omg_j(F,\bar{F})=\varrho_j(\psi(F,\bar{F}))$ for $F \in \p \bar{F}$.
Then, similar to the derivation of (\ref{Dsum}), the matrix $D_i^{\omg_j}$ of $\d_i^{\omg_j}$ on $(M, \omg_j)$ is precisely
$D^{\omg_j}(M)=\sum_{g \in \Psi_i} D_i^g(M) \varrho_j(g)$.
 By Eq. (\ref{Liupw}), we have
 we have
  $$ (I \otimes T)^{-1} \tilde{\L}^{\up}(K) (I \otimes T) = \L_i^{\up}(M) \oplus \bigoplus_{j=2}^t \L_i^{\up}(M,\omg_j).$$
  The result now follows.
\end{proof}

A special case in Theorem \ref{Abelcov} occurs when $\Psi_i=\langle g \rangle$, where $g=(a_1 a_2 \ldots a_k)$ is a cycle in $\mathbb{S}_k$.
In this case, the irreducible representations of $\Psi_i$ are readily determined, namely,
$\varrho_j(g)=e^{\textbf{i}\frac{ 2 \pi j}{k}}$ for $j=0,1,\ldots,k-1$, where $\textbf{i}=\sqrt{-1}$.

\subsection{A decomposition of $\L_i^{\dw}(K)$}
By analogous reasoning to the preceding section, we obtain results for the $i$-down Laplace operator of the covering  complex $K$.
Let $\phi: K \to M$ be a $k$-fold covering.
By Lemma \ref{perCom}, $B_{i-1}(K)$ is a $k$-fold covering of $B_{i-1}(M)$,
and there exists a permutation assignment $\psi$ on $B_{i-1}(M)$ such that $B_{i-1}(M)^\psi \cong B_{i-1}(K)$ via an isomorphism $\eta$ that maps $S_{i-1}(M) \times [k]$ to $S_{i-1}(K)$ and $S_i(M) \times [k]$ to $S_i(K)$.
Assume weights satisfy $w_K(F)=w_M(\phi(F))$ for all $F \in S_{i-1}(K) \cup S_i(K)$.
Define the subgroup:
\begin{equation}\label{Psi2}\Psi_{i-1}=\langle \psi(G,\bar{G}): G \in S_{i-1}(M), \bar{G} \in S_i(M), G \in \p \bar{G}\rangle \le \mathbb{S}_k,
\end{equation}
and lets its permutation representation $\varrho$ decompose into irreducible sub-representations as:
$$ \varrho=1 \oplus \varrho_2 \oplus \cdots \oplus \varrho_t,$$
where $1$ denotes the trivial representation.
For each $g \in \Psi_{i-1}$, let $D^g_{i-1}(M)=(D^g_{\bar{G}, G})$ be defined as in (\ref{Dg}).
Then there exists an invertible matrix $T$ such that
\begin{multline*}
(I \otimes T)^{-1} (\Lambda_i(M)^\eta)^{-1}\L_i^{\dw}(K)\Lambda_i(M)^\eta (I \otimes T) \\
=\L_i^{\dw}(M) \oplus \bigoplus_{j=2}^t \bigg( \sum_{g\in \Psi_{i-1}} (D_{i-1}^g(M) W_{i-1}(M)^{-1} ) \otimes \varrho_j(g) \bigg) \bigg(\sum_{g\in \Psi_{i-1}} (D_{i-1}^g(M)^\top W_i(M)) \otimes \varrho_j(g^{-1})\bigg),
\end{multline*}
where  $\Lambda_i(M)^\eta$ is  defined  as in (\ref{Lmdip}).

\begin{thm}
Let $\phi: K \to M$ be a $k$-fold covering map.
Then
$$ \spec L_i^{\dw}(M) \subset \spec L_i^{\dw}(K), ~ \spec \Delta_i^{\dw}(M) \subset \spec \Delta_i^{\dw}(K).$$
\end{thm}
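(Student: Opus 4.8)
The plan is to mirror the proof of Theorem \ref{inclusion}, using the decomposition of $\L_i^{\dw}(K)$ established immediately above in place of the one coming from Theorem \ref{mainthm}. First I would note that that identity exhibits the matrix $(I \otimes T)^{-1} (\Lambda_i(M)^\eta)^{-1}\L_i^{\dw}(K)\Lambda_i(M)^\eta (I \otimes T)$ as a direct sum whose first summand is exactly $\L_i^{\dw}(M)$. Since conjugation by the signature matrix $\Lambda_i(M)^\eta$ and by the invertible matrix $I \otimes T$ are similarity transformations, they preserve the spectrum as a multiset; hence $\spec \L_i^{\dw}(K)$ is the multiset union of $\spec \L_i^{\dw}(M)$ with the spectra of the remaining blocks, and in particular $\spec \L_i^{\dw}(M) \subset \spec \L_i^{\dw}(K)$, for any weight function satisfying $w_K(F)=w_M(\phi(F))$ on all $(i-1)$-faces and $i$-faces.

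Next I would verify this weight hypothesis in the two cases named in the statement. For the combinatorial down Laplacian $L_i^{\dw}$ one has $w\equiv 1$ on all faces of $K$ and of $M$, so $w_K(F)=1=w_M(\phi(F))$ trivially, and the decomposition applies verbatim. For the normalized down Laplacian $\Delta_i^{\dw}$ I would argue exactly as in the proof of Theorem \ref{inclusion}: by Lemma \ref{localinc}, $\phi$ restricts to a bijection from the set of $(i{+}1)$-cofaces of $F$ onto that of $\phi(F)$, so $F$ is a facet precisely when $\phi(F)$ is, in which case $w_K(F)=1=w_M(\phi(F))$; for non-facets one descends by induction on codimension, using that the normalization condition $w(F)=\sum_{\bar F\,:\,F\in\p\bar F} w(\bar F)$ together with the bijection of Lemma \ref{localinc} forces $w_K(F)=w_M(\phi(F))$. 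Applying the decomposition with these weights yields $\spec \Delta_i^{\dw}(M)\subset\spec \Delta_i^{\dw}(K)$.

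I do not anticipate a genuine obstacle here: the substantive work — Lemma \ref{perCom} applied to the incidence graph $B_{i-1}$, the analogue of Lemma \ref{F-DiK} at level $i{-}1$, and the representation-theoretic splitting via $\Psi_{i-1}$ of \eqref{Psi2} — has already been carried out in the paragraph preceding the statement. The only points requiring care are purely bookkeeping: making sure the down-Laplacian decomposition is set up with $B_{i-1}(M)$ and $\Psi_{i-1}$ rather than $B_i(M)$ and $\Psi_i$, and that the weights are tracked on $S_{i-1}$ and $S_i$; both are already in place. Thus the proof reduces to invoking the displayed decomposition of $\L_i^{\dw}(K)$ and the similarity-invariance of the spectrum, exactly as Theorem \ref{inclusion} follows from Theorem \ref{mainthm}.
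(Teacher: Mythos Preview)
Your proposal is correct and follows essentially the same approach as the paper: the paper presents the down-Laplacian decomposition in the paragraph preceding the statement and then states the theorem without a separate proof, relying on the phrase ``by analogous reasoning to the preceding section,'' which amounts precisely to the similarity-invariance argument and the weight verification (via Lemma~\ref{localinc}) that you spell out.
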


\begin{thm}
Let $\phi: K \to M$ be a $2$-fold covering map.
Suppose that $w_K(F)=w_M(\phi(F))$ for all $F \in S_{i-1}(K) \cup S_i(K)$.
Then
$$ \spec \L_i^{\dw}(K))= \spec\L_i^{\dw}(M) \cup \spec\L_i^{\dw}(M,s),$$
where $s(F, \bar{F})=\sgn \psi(F, \bar{F})$ for $(F,  \bar{F}) \in S_{i-1}(M) \times S_i(M)$ with $F \in \p \bar{F}$, and $\psi:E(B_{i-1}(M)) \to \mathbb{S}_2$ such that $B_{i-1}(M)^\psi \cong B_{i-1}(K)$.
\end{thm}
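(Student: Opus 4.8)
The plan is to mirror exactly the proof of Theorem~\ref{SpecUnion}, but applied to the incidence graph $B_{i-1}(M)$ in place of $B_i(M)$ and to the $i$-down Laplace operator in place of the $i$-up one. The structural input is the $i$-down analogue of the decomposition already displayed above this theorem in the excerpt: for a $k$-fold covering $\phi:K\to M$ there is an isomorphism $\eta:B_{i-1}(M)^\psi\to B_{i-1}(K)$, a permutation group $\Psi_{i-1}\le\mathbb{S}_k$ generated by the image of $\psi$, and an invertible matrix $T$ giving
\begin{multline*}
(I \otimes T)^{-1} (\Lambda_i(M)^\eta)^{-1}\L_i^{\dw}(K)\Lambda_i(M)^\eta (I \otimes T) \\
=\L_i^{\dw}(M) \oplus \bigoplus_{j=2}^t \bigg( \sum_{g\in \Psi_{i-1}} (D_{i-1}^g(M) W_{i-1}(M)^{-1} ) \otimes \varrho_j(g) \bigg) \bigg(\sum_{g\in \Psi_{i-1}} (D_{i-1}^g(M)^\top W_i(M)) \otimes \varrho_j(g^{-1})\bigg).
\end{multline*}

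First I would specialize to $k=2$ and argue that $\Psi_{i-1}=\mathbb{S}_2$. Since $K$ is connected (part of the definition of a strong covering), the incidence graph $B_{i-1}(K)$ is connected, hence so is the isomorphic derived graph $B_{i-1}(M)^\psi$; but if $\Psi_{i-1}$ were trivial then $B_{i-1}(M)^\psi$ would be two disjoint copies of $B_{i-1}(M)$, a contradiction. Therefore $\Psi_{i-1}=\mathbb{S}_2$, which has exactly two irreducible representations, the trivial one $\varrho_1=1$ and the sign representation $\varrho_2=\sgn$, and one may choose $T$ with $T^{-1}P^gT=I_1\oplus\sgn g$ for both $g\in\mathbb{S}_2$. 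So in the displayed decomposition there is a single summand $j=2$ of degree one, and using $\sgn g^{-1}=\sgn g$ it becomes
$$
W_i(M)\bigg(\sum_{g\in\Psi_{i-1}} D_{i-1}^g(M)\,\sgn g\bigg)^{\!\top} W_{i-1}(M)^{-1} \bigg(\sum_{g\in\Psi_{i-1}} D_{i-1}^g(M)\,\sgn g\bigg),
$$
after transposing appropriately; here I would be careful to keep the $i$-down ordering of the Kronecker factors straight.

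Next I would identify this summand with $\L_i^{\dw}(M,s)$. Define the incidence-signed complex $(M,s)$ by $s(F,\bar F)=\sgn\psi(F,\bar F)$ for $(F,\bar F)\in S_{i-1}(M)\times S_i(M)$ with $F\in\p\bar F$; then exactly as in the derivation of \eqref{Dsum}, the matrix $D_{i-1}^s(M)$ of the signed coboundary map $\d_{i-1}^s$ on $(M,s)$ equals $\sum_{g\in\Psi_{i-1}} D_{i-1}^g(M)\,\sgn g$. Substituting into the formula \eqref{LiupS} for $\L_i^{\dw}(K,s)$ (the $i$-down case $\L_i^{\dw}(K,s)=D^s_{i-1}W_{i-1}^{-1}(D^s_{i-1})^\top W_i$) shows the $j=2$ summand is precisely $\L_i^{\dw}(M,s)$. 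Hence
$$
(I \otimes T)^{-1}(\Lambda_i(M)^\eta)^{-1}\L_i^{\dw}(K)\Lambda_i(M)^\eta(I \otimes T)=\L_i^{\dw}(M)\oplus\L_i^{\dw}(M,s),
$$
and since conjugation by $\Lambda_i(M)^\eta$ and by $I\otimes T$ preserves the spectrum, the multiset equality $\spec\L_i^{\dw}(K)=\spec\L_i^{\dw}(M)\cup\spec\L_i^{\dw}(M,s)$ follows. The only genuinely delicate point is bookkeeping: making sure the transpose/conjugate-transpose placements and the order of the two Kronecker factors in the $i$-down decomposition match the $i$-down definitions in \eqref{LiupS}, rather than the $i$-up ones; there is no new conceptual obstacle beyond that, since everything reduces to the already-proven down-Laplacian decomposition plus the elementary representation theory of $\mathbb{S}_2$.
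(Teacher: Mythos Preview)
Your proposal is correct and follows exactly the approach the paper intends: the paper does not give a separate proof of this theorem but simply says ``by analogous reasoning to the preceding section,'' and your argument carries out precisely that analogy---specialize the displayed $i$-down decomposition to $k=2$, use connectedness of $B_{i-1}(M)^\psi$ to force $\Psi_{i-1}=\mathbb{S}_2$, and identify the sign-representation summand with $\L_i^{\dw}(M,s)$ via \eqref{LiupS}. One minor remark: your intermediate display of the $j=2$ summand has the factors in the wrong order (it should read $D_{i-1}^s(M)\,W_{i-1}(M)^{-1}\,(D_{i-1}^s(M))^\top\,W_i(M)$, matching the down-Laplacian formula in \eqref{LiupS}), but you explicitly flag this bookkeeping as the delicate point and your final identification is correct.
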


\begin{thm}
Let $\phi: K \to M$ be a $k$-fold covering map, let $\Psi_{i-1}$ be the group defined in (\ref{Psi2}) that is abelian.
Suppose that $w_K(F)=w_M(\phi(F))$ for all $F \in S_{i-1}(K) \cup S_i(K)$.
Then
$$\spec \L_i^{\dw}(K))= \spec\L_i^{\dw}(M) \cup \bigcup_{j=2}^k \spec\L_i^{\dw}(M,\omg_j),$$
where
the permutation representation $\varrho$ of $\Psi_{i-1}$ has a decomposition
$\varrho=1 \oplus \varrho_2 \oplus \cdots \oplus \varrho_k$ of irreducible sub-representation of degree one,
and
$\omg_j(F,\bar{F})=\varrho_j(\psi(F,\bar{F})$ for
 $(F,  \bar{F}) \in S_{i-1}(M) \times S_i(M)$ such that $F \in \p \bar{F}$.
\end{thm}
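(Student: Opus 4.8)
The plan is to mirror the argument used to prove Theorem \ref{SpecUnion} and Theorem \ref{Abelcov}, but applied to the $i$-down Laplace operator rather than the $i$-up one, using the decomposition of $\L_i^{\dw}(K)$ stated just before the theorem. First I would invoke Lemma \ref{perCom} with index $i-1$: since $\phi\colon K\to M$ is a $k$-fold covering, $B_{i-1}(K)$ is a $k$-fold covering of $B_{i-1}(M)$, and there is a permutation assignment $\psi\colon E(B_{i-1}(M))\to\mathbb{S}_k$ with an isomorphism $\eta\colon B_{i-1}(M)^\psi\to B_{i-1}(K)$ sending $S_{i-1}(M)\times[k]$ to $S_{i-1}(K)$ and $S_i(M)\times[k]$ to $S_i(K)$. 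Let $\Psi_{i-1}\le\mathbb{S}_k$ be the group in \eqref{Psi2}, which we assume abelian.

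Next I would run the structural facts established in the proof of Theorem \ref{Abelcov}, now for $\Psi_{i-1}$: connectedness of $K$ forces $B_{i-1}(K)$, hence $B_{i-1}(M)^\psi$, to be connected, so by Theorem 3.4 of \cite{Song} the group $\Psi_{i-1}$ acts transitively on $[k]$; the abelian hypothesis then yields a faithful action (a stabilizer-fixing element commutes its way to the identity via transitivity), so by orbit--stabilizer $|\Psi_{i-1}|=k$. Since a finite abelian group has only degree-one irreducible representations, $\varrho=1\oplus\varrho_2\oplus\cdots\oplus\varrho_k$ and there is an invertible $T$ with $T^{-1}P^gT=I_1\oplus\varrho_2(g)\oplus\cdots\oplus\varrho_k(g)$ for all $g\in\Psi_{i-1}$.

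Then I would substitute this into the displayed decomposition of $\L_i^{\dw}(K)$ given immediately before the theorem statement. Because each $\varrho_j$ has degree one, the summand indexed by $j$ collapses to
\[
W_i(M)^{-1}\Bigl(\sum_{g\in\Psi_{i-1}}D_{i-1}^g(M)\,\varrho_j(g)\Bigr)^{\!\star} W_{i}(M)\,\cdots
\]
wait---more precisely, using $\varrho_j(g^{-1})=\overline{\varrho_j(g)}$, the $j$-th block becomes $D_{i-1}^{\omg_j}(M)\,W_{i-1}(M)^{-1}(D_{i-1}^{\omg_j}(M))^\star W_i(M)$ where $\omg_j(F,\bar F):=\varrho_j(\psi(F,\bar F))$ for incidences $(F,\bar F)\in S_{i-1}(M)\times S_i(M)$. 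By \eqref{Dsum}-type reasoning, $\sum_{g\in\Psi_{i-1}}D_{i-1}^g(M)\varrho_j(g)$ is exactly the matrix $D_{i-1}^{\omg_j}$ of the weighted coboundary map $\d_{i-1}^{\omg_j}$ on the incidence-weighted complex $(M,\omg_j)$, so by \eqref{Liupw} the $j$-th block is precisely $\L_i^{\dw}(M,\omg_j)$, while the $j=1$ block is $\L_i^{\dw}(M)$. Since $\L_i^{\dw}(K)$ is conjugate (via $\Lambda_i(M)^\eta$ and $I\otimes T$) to this direct sum, the spectra coincide as multisets, giving the claimed union. The main obstacle is bookkeeping rather than ideas: one must be careful that the conjugating matrices are genuine similarities (so the spectrum is preserved with multiplicity), that the index shift from $i$-up to $i$-down is handled consistently throughout (the roles of $W_{i-1}(M)$ versus $W_{i+1}(M)$, and $S_{i-1},S_i$ versus $S_i,S_{i+1}$), and that the identification of $\sum_g D_{i-1}^g(M)\varrho_j(g)$ with $D_{i-1}^{\omg_j}$ matches the sign conventions in \eqref{Dg} and the definition of the weighted coboundary map; once these are lined up the result is immediate from Theorem \ref{Abelcov}'s template.
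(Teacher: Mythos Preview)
Your proposal is correct and follows essentially the same approach as the paper: the paper does not write out a proof for this theorem but simply states it after the displayed decomposition of $\L_i^{\dw}(K)$, indicating that it follows ``by analogous reasoning'' to Theorem \ref{Abelcov}. Your plan faithfully reproduces that template (transitivity plus abelianness gives $|\Psi_{i-1}|=k$ and degree-one irreducibles, then each block collapses to $D_{i-1}^{\omg_j}W_{i-1}^{-1}(D_{i-1}^{\omg_j})^\star W_i=\L_i^{\dw}(M,\omg_j)$ via \eqref{Liupw}), with the bookkeeping caveats you flag being exactly the right ones.
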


\section{Cohomology groups of covering complexes}
Eckmann \cite{Eckmann} (also see \cite{HJ13B}),
showed that the $i$-th reduced cohomology group of a complex $K$ is isomophic to the kernel of the $i$-Laplace operator $\L_i(K)$, namely,
$ \tilde{H}^i(K,\mathbb{R})= \text{ker}\L_i(K)$.
The \emph{$i$-th Betti number} of $K$, denoted by $\beta_i(K)$, is defined be the dimension of $\tilde{H}^i(K,\R)$.
In this section, we prove that if $K$ is a covering complex over $M$,
then $\beta_i(K) \ge \beta_i(M)$.

Horak and Jost \cite{HJ13B} provided explicit formulas for $i$-up and $i$-down Laplace operators as follows.
\begin{multline}\label{kerup}
 (\L_i^{up}(M) f)([G])  =\!\!\!\sum_{\bar{G} \in S_{i+1}(M): \atop G \in \p \bar{G}}\!\! \frac{w(\bar{G})}{w(G)}f([G])+ \!\!\!\!\!\!\!\!\!\sum_{G' \in S_i(M):\atop G' \cup G = \bar{G} \in S_{i+1}(M)}\!\!\!\!\!\!\!\!\!\!\frac{w(\bar{G})}{w(G)}\sgn([G], \p [\bar{G}]) \sgn([G'], \p [\bar{G}])f[G'],\\
(\L^{down}_i(M)f)([G]) =\!\!\!\!\!\sum_{H \in S_{i-1}(M):\atop H \in \p G} \!\!\!\frac{w(G)}{w(H)} f([G])+\!\!\!\!\!\!\!\!\!
\sum_{G'\in S_i(M): \atop G\cap G'=H \in S_{i-1}(M)} \!\!\!\! \!\!\!\!\!\!\!\frac{w(G')}{w(H)}\sgn([H],\p [G])\sgn([H], \p [G']) f([G']).
\end{multline}

\begin{lem}\label{kerupdown}
Let $\phi: K \to M$ be a $k$-fold covering map.
Suppose that $\frac{w_K(\bar{F})}{w_K(F)}=\frac{w_M(\phi(\bar{F}))}{w_M(\phi(F))}$ for each pair $(F,\bar{F}) \in S_i(K) \times S_{i+1}(K)$ with $F \in \p \bar{F}$.
For $f \in C^i(M,\R)$, define $\bar{f} \in C^i(K,\R)$ by
\begin{equation}\label{barf} \bar{f}([F])=f([\phi(F))])\sgn([F], [\phi(F)]).\end{equation}
Then the following results hold.

$(1)$  If $f \in \text{ker}\L^{\up}_i(M)$, then $\bar{f} \in \text{ker}\L^{\up}_i(K)$.

$(2)$  If $f \in \text{ker}\L^{\dw}_i(M)$, then $\bar{f} \in
\text{ker}\L^{\dw}_i(K)$.
\end{lem}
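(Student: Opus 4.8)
The plan is to verify both parts by a direct computation using the explicit formulas \eqref{kerup} for the $i$-up and $i$-down Laplace operators. I will show that if $f\in C^i(M,\R)$ and $\bar f$ is defined by \eqref{barf}, then for every $F\in S_i(K)$ the value $(\L_i^{\up}(K)\bar f)([F])$ equals $\sgn([F],[\phi(F)])\cdot(\L_i^{\up}(M)f)([\phi(F)])$, and similarly for the down operator; the two claimed implications follow immediately by taking $f$ in the respective kernels. The key tool throughout will be the sign-cocycle identity \eqref{sgncov}, together with Lemma \ref{localinc}, which says that for $F\in\phi^{-1}(G)$ the covering map restricts to a bijection between the $(i+1)$-cofaces of $F$ in $K$ and those of $G=\phi(F)$ in $M$ (and the analogous statement for the down case, applied to $B_{i-1}$).

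First I would handle part $(1)$. Fix $F\in S_i(K)$ and write $G=\phi(F)$. In the formula for $(\L_i^{\up}(K)\bar f)([F])$ the ``diagonal'' term is $\sum_{\bar F:\,F\in\p\bar F}\frac{w_K(\bar F)}{w_K(F)}\,\bar f([F])$; using Lemma \ref{localinc} to reindex the sum by the cofaces $\bar G$ of $G$ and the weight hypothesis $\frac{w_K(\bar F)}{w_K(F)}=\frac{w_M(\bar G)}{w_M(G)}$, plus $\bar f([F])=\sgn([F],[G])f([G])$, this becomes $\sgn([F],[G])\sum_{\bar G:\,G\in\p\bar G}\frac{w_M(\bar G)}{w_M(G)}f([G])$, i.e.\ $\sgn([F],[G])$ times the corresponding term of $(\L_i^{\up}(M)f)([G])$. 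For the ``off-diagonal'' term one fixes $\bar F$ with $F\in\p\bar F$ and $F'\in\p\bar F$, $F'\ne F$; then $\bar F=F\cup F'$, and under $\phi$ this produces $\bar G=\phi(\bar F)=G\cup G'$ with $G'=\phi(F')$, and by Lemma \ref{localinc} (or rather its analogue identifying the two boundary $i$-faces) $F'\mapsto G'$ is the induced correspondence. Here is where \eqref{sgncov} does the work: applying it to the pair $(F,\bar F)$ gives $\sgn([F],\p[\bar F])=\sgn([G],\p[\bar G])\,\sgn([F],[G])\,\sgn([\bar F],[\bar G])$, and applying it to $(F',\bar F)$ gives $\sgn([F'],\p[\bar F])=\sgn([G'],\p[\bar G])\,\sgn([F'],[G'])\,\sgn([\bar F],[\bar G])$. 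Multiplying these two, the two copies of $\sgn([\bar F],[\bar G])$ cancel (each is $\pm1$), so $\sgn([F],\p[\bar F])\sgn([F'],\p[\bar F])=\sgn([G],\p[\bar G])\sgn([G'],\p[\bar G])\,\sgn([F],[G])\,\sgn([F'],[G'])$. Combining this with $\bar f([F'])=\sgn([F'],[G'])f([G'])$, the factor $\sgn([F'],[G'])^2=1$ disappears and the whole off-diagonal term equals $\sgn([F],[G])$ times the corresponding term of $(\L_i^{\up}(M)f)([G])$, using once more the weight identity for the ratios $w(\bar G)/w(G)$. Adding the two contributions yields $(\L_i^{\up}(K)\bar f)([F])=\sgn([F],[G])(\L_i^{\up}(M)f)([G])$, which vanishes when $f\in\ker\L_i^{\up}(M)$.

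Part $(2)$ is entirely parallel, using the second line of \eqref{kerup}: now one sums over the $(i-1)$-faces $H\in\p F$ and over $F'$ with $F\cap F'=H\in S_{i-1}(K)$. One needs the bijection between $(i-1)$-subfaces of $F$ in $K$ and of $G$ in $M$, which is just the fact that $\phi|_F\colon F\to G$ is a bijection, and one applies \eqref{sgncov} to the incidences $(H,F)$ and $(H,F')$ (all living in the relevant dimension, so the hypothesis on weight ratios should be read for pairs in $S_{i-1}(K)\times S_i(K)$, as in the statement of the down-results). The same cancellation of the shared sign factor $\sgn([F'\text{-type coface}],\ldots)$ occurs, and one concludes $(\L_i^{\dw}(K)\bar f)([F])=\sgn([F],[G])(\L_i^{\dw}(M)f)([G])$. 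I expect the main obstacle to be purely bookkeeping: making sure the reindexing of sums via Lemma \ref{localinc} is legitimate (in particular that distinct $\bar F$ over $F$ map to distinct $\bar G$ over $G$, which is exactly the injectivity in that lemma, and that no coface of $G$ is missed, which is surjectivity/strongness), and tracking the several sign factors so that precisely the squares $\sgn(\cdot)^2=1$ cancel while the single surviving factor $\sgn([F],[\phi(F)])$ factors out uniformly over all terms. Once the sign algebra is organized around \eqref{sgncov}, no genuinely hard step remains.
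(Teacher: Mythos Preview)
Your proposal is correct and follows essentially the same route as the paper: a direct computation of $(\L_i^{\up}(K)\bar f)([F])$ and $(\L_i^{\dw}(K)\bar f)([F])$ via the explicit formulas \eqref{kerup}, reindexing the sums through the bijections supplied by Lemma~\ref{localinc} (and $\phi|_F$ for the down case), invoking the weight hypothesis on the ratios, and using the sign relation \eqref{sgncov} twice so that the factors $\sgn([\bar F],[\bar G])$ (respectively $\sgn([E],[H])$) square away and a single global $\sgn([F],[\phi(F)])$ survives. Your observation that part~(2) tacitly requires the weight-ratio hypothesis for pairs in $S_{i-1}(K)\times S_i(K)$ rather than $S_i(K)\times S_{i+1}(K)$ is accurate; the paper uses it implicitly without restating it.
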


\begin{proof}
(1) If $f  \in \text{ker}\L^{\up}_i(M)$,
then for each $G\in S_i(M)$,  $(\L_i^{up}(M) f)([G])=0$.
By Lemma \ref{localinc}, for any $G \in S_i(M)$ and each $F \in \phi^{-1}(G) \subseteq S_i(K)$,
there is a bijection also denoted by $\phi$:
$$\phi: \{\bar{F} \in S_{i+1}(K): F \in \p \bar{F}\} \to \{\bar{G} \in S_{i+1}(M): G \in \p \bar{G}\}, \bar{F} \mapsto \phi(\bar{F}).$$
Similarly,
there is a bijection:
 $$\phi: \{F' \in S_i(K): F \cup F' \in S_{i+1}(K)\} \to \{G' \in S_{i}(M): G \cup G' \in S_{i+1}(M)\}, F' \mapsto \phi(F').$$
Using these bijections, the weight assumption and the notations $G=\phi(F)$, $G':=\phi(F')$ and $\bar{G}:=\phi(\bar{F})$, we have
\begin{align*}
(\L_i^{up}(K) \bar{f})([F]) &=\sum_{\bar{F} \in S_{i+1}(K): \atop F \in \p \bar{F}} \frac{w(\bar{F})}{w(F)}\bar{f}([F])+ \sum_{F' \in S_i(K): \atop F' \cup F= \bar{F} \in S_{i+1}(K)} \frac{w(\bar{F})}{w(F)}\sgn([F], \p [\bar{F}]) \sgn([F'], \p [\bar{F}])\bar{f}([F'])\\
&=\sum_{\bar{F} \in S_{i+1}(K): \atop F \in \p \bar{F}} \frac{w(\phi(\bar{F}))}{w(\phi(F))}f([\phi(F)])\sgn([F], [\phi(F)])\\
&~~~+ \sum_{F' \in S_i(K): \atop F' \cup F= \bar{F} \in S_{i+1}(K)} \frac{w(\phi(\bar{F}))}{w(\phi(F))}\sgn([F], \p [\bar{F}]) \sgn([F'], \p [\bar{F}])f([\phi(F')])\sgn([F'], [\phi(F')])\\
&=\sum_{\bar{G} \in S_{i+1}(M): \atop G \in \p \bar{G}} \frac{w(\bar{G})}{w(G)}f([G])\sgn([F], [G])\\
&~~~+ \sum_{G' \in S_i(M): \atop G \cup G' = \bar{G} \in S_{i+1}(M)} \frac{w(\bar{G})}{w(G)}\sgn([F], \p [\bar{F}]) \sgn([F'], \p [\bar{F}])f([G'])\sgn([F'], [G'])\\
&=\sgn([F], [G]) \\
& ~~~ \cdot \bigg(\!\sum_{\bar{G} \in S_{i+1}(M): \atop G \in \p \bar{G}} \frac{w(\bar{G})}{w(G)}f([G])
+ \!\!\!\!\!\!\!\!\sum_{G' \in S_i(M): \atop G \cup G' = \bar{G} \in S_{i+1}(M)} \!\!\!\! \frac{w(\bar{G})}{w(G)}\sgn([G], \p [\bar{G}]) \sgn([G'], \p [\bar{G}])f([G'])\!\bigg)\\
&=\sgn([F], [G])(\L_i^{up}(M) f)([G])=0,
\end{align*}
where the last equality follows from (\ref{kerup}) and the second last equality follows from the sign relation (\ref{sgncov}).
Thus, $\bar{f} \in \text{ker}\L_i^{up}(K)$.

(2) If $f \in \text{ker}\L^{\dw}_i(M)$, then for each $G \in S_i(M)$,
$(\L^{\dw}_i(M)f([G])=0$.
For every $G \in S_i(M)$ and each $F \in \phi^{-1}(G) \subseteq S_i(K)$,
As $\phi|_F: F \to G$ is bijection, there is a bijection
$$\phi: \{E \in S_{i-1}(K): E \in \p F\} \to \{H \in S_{i-1}(M): H \in \p G\}, E \mapsto \phi(E).$$
By Lemma \ref{localinc},
there is a bijection
  $$\phi: \{F' \in S_i(K): F \cap F' \in S_{i-1}(K)\} \to \{G'\in S_i(M): G \cap G'\in S_{i-1}(M)\}, F' \mapsto \phi(F').$$
Using these bijections,  the weight assumption and the notations $G=\phi(F)$, $G':=\phi(F')$ and $H:=\phi(E)$, we have
\begin{align*}
(\L_i^{\dw}(K) \bar{f})([F]) &=\sum_{E \in S_{i-1}(K): \atop E \in \p F} \frac{w(F)}{w(E)}\bar{f}([F])+ \sum_{F' \in S_i(K): \atop F \cap F'=E \in S_{i-1}(K)} \frac{w(F')}{w(E)}\sgn([E], \p [F) \sgn([E], \p [F'])\bar{f}([F'])\\
&=\sum_{E \in S_{i-1}(K): \atop E \in \p F} \frac{w(\phi(F))}{w(\phi(E))}f([\phi(F)])\sgn([F],[\phi(F)])\\
&+ \sum_{F' \in S_i(K): \atop F \cap F'=E \in S_{i-1}(K)} \frac{w(\phi(F'))}{w(\phi(E))}\sgn([E], \p [F]) \sgn([E], \p [F'])f([\phi(F')])\sgn([F'],[\phi(F')])\\
&=\sum_{H \in S_{i-1}(M): \atop H \in \p G} \frac{w(G)}{w(H)}f([G])\sgn([F], [G])\\
&~~~+ \sum_{G' \in S_i(M):\atop G \cap G' =H \in S_{i-1}(M)} \frac{w(G')}{w(E)}\sgn([E], \p [F]) \sgn([E], \p [F'])f([G'])\sgn([F'],[G'])\\
&=\sgn([F], [G]) \\
& ~~~ \cdot  \bigg(\!\!\sum_{H \in S_{i-1}(M): \atop H \in \p G} \!\!\frac{w(G)}{w(H)}f([G])
+ \!\!\!\!\!\!\!\!\!\!\!\!\!\!\sum_{G' \in S_i(M):\atop G \cap G' =H \in S_{i-1}(M)} \!\!\!\!\!\!\!\!\!\!\!\!\frac{w(G')}{w(E)}\sgn([H], \p [G) \sgn([H], \p [G'])f([G'])\!\!\bigg)\\
&=\sgn([F], [G])(\L^{\dw}_i(M)f([G])=0,
\end{align*}
where the second last equality follows from  (\ref{sgncov}).
Thus, $\bar{f} \in \text{ker}\L_i^{\dw}(K)$.
 \end{proof}

\begin{thm}\label{dimhom}
Let $K$ be a covering  complex of a  complex $M$.
Then $\beta_i(K) \ge \beta_i(M)$ for all $i$.
\end{thm}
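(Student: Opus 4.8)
The plan is to combine the Hodge-theoretic identity $\tilde H^i(K,\R)=\ker\L_i(K)$ (and likewise for $M$) with Lemma~\ref{kerupdown} to exhibit a linear map from $\tilde H^i(M,\R)$ into $\tilde H^i(K,\R)$ and then show it is injective. Since $\L_i=\L_i^{\up}+\L_i^{\dw}$ is a sum of two positive semidefinite operators, $\ker\L_i(M)=\ker\L_i^{\up}(M)\cap\ker\L_i^{\dw}(M)$, and the same for $K$. So it suffices to produce an injective linear map $\F\colon C^i(M,\R)\to C^i(K,\R)$ that carries $\ker\L_i^{\up}(M)\cap\ker\L_i^{\dw}(M)$ into $\ker\L_i^{\up}(K)\cap\ker\L_i^{\dw}(K)$.

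First I would take $\F$ to be the map $f\mapsto\bar f$ defined in \eqref{barf}, namely $\bar f([F])=f([\phi(F)])\sgn([F],[\phi(F)])$; one checks this is well-defined on cochains (changing the orientation of $F$ flips both factors) and clearly linear. Next I would verify the weight hypothesis of Lemma~\ref{kerupdown}: under the standing assumption $w_K(F)=w_M(\phi(F))$ for all $F$ (for the combinatorial Laplacian all weights are $1$; for the normalized Laplacian this follows by the induction used in the proof of Theorem~\ref{inclusion}), we get $\tfrac{w_K(\bar F)}{w_K(F)}=\tfrac{w_M(\phi(\bar F))}{w_M(\phi(F))}$, so both parts of Lemma~\ref{kerupdown} apply. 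Hence if $f\in\ker\L_i(M)=\ker\L_i^{\up}(M)\cap\ker\L_i^{\dw}(M)$, then $\bar f\in\ker\L_i^{\up}(K)\cap\ker\L_i^{\dw}(K)=\ker\L_i(K)$, so $\F$ restricts to a linear map $\ker\L_i(M)\to\ker\L_i(K)$.

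Finally I would check injectivity of $\F$. Suppose $\bar f=0$. Fix any $G\in S_i(M)$; by Lemma~\ref{covdeg} the fibre $\phi^{-1}(G)$ is nonempty, so pick $F\in\phi^{-1}(G)$. Then $0=\bar f([F])=f([G])\sgn([F],[G])$, and since $\sgn([F],[G])=\pm1$ we get $f([G])=0$. As $G$ was arbitrary, $f=0$. Therefore $\F|_{\ker\L_i(M)}$ is an injective linear map into $\ker\L_i(K)$, whence
$$\beta_i(M)=\dim\ker\L_i(M)\le\dim\ker\L_i(K)=\beta_i(K).$$

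The only genuinely substantive input is Lemma~\ref{kerupdown}, which is already established; the remaining steps — well-definedness, linearity, and injectivity of $f\mapsto\bar f$, plus the elementary fact $\ker(\L^{\up}+\L^{\dw})=\ker\L^{\up}\cap\ker\L^{\dw}$ for positive semidefinite summands — are routine. The one point to state carefully is why the weight hypothesis of Lemma~\ref{kerupdown} holds in the two cases of interest (combinatorial and normalized Laplacians), which I would dispatch exactly as in the proof of Theorem~\ref{inclusion}. So there is no real obstacle; the proof is essentially an assembly of Lemma~\ref{kerupdown} with the Hodge isomorphism.
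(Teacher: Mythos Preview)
Your proposal is correct and is essentially the same argument as the paper's own proof: both use Eckmann's Hodge isomorphism together with $\ker\L_i=\ker\L_i^{\up}\cap\ker\L_i^{\dw}$, invoke Lemma~\ref{kerupdown} (after noting the weight hypothesis can be arranged, e.g.\ by taking the combinatorial or normalized weights), and then show the linear map $f\mapsto\bar f$ is injective by evaluating at a preimage of each $G\in S_i(M)$. The only cosmetic difference is that the paper phrases injectivity as ``a basis maps to a linearly independent set,'' while you prove $\ker\F=0$ directly; these are equivalent.
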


\begin{proof}
By Eckmann's result, it suffices to prove the $\ker \L_i(M)$ can be embedded into $\text{ker}\L_i(K)$ as a subspace.
By Theorem 2.2 of \cite{HJ13B}, $\ker \L_i(M)= \ker \L_i^{\up}(M) \cap \ker \L_i^{\dw}(M)$.
Suppose that $\phi: K \to M$ is a covering map.
We choose weights on $K,M$ such that
$\frac{w_K(\bar{F})}{w_K(F)}=\frac{w_M(\phi(\bar{F}))}{w_M(\phi(F))}$ for each pair $(F,\bar{F})$ with $F \in \p \bar{F}$.
This can easily be done by using combinatorial Laplace operator or normalized Laplace operator.
By Lemma \ref{kerupdown}, for each $f \in \ker \L_i^{\up}(M) \cap \ker \L_i^{\dw}(M)=\ker \L_i(M)$, letting $\bar{f}$ be defined as in (\ref{barf}),
then $\bar{f} \in \ker \L_i^{\up}(K) \cap \ker \L_i^{\dw}(K)=\ker \L_i(K)$.

Let $V=\{\bar{f}: f \in \ker \L_i(M)\}$.
It is easily verified that $V$ is a subspace of $\ker \L_i(K)$.
We will show that $V$ has the same dimension as $\ker \L_i(M)$.
Let $f_1,\ldots,f_t$ be a basis of $\ker \L_i(M)$.
Let $\la_1,\ldots,\la_t \in \R$ be such that
$$ \la_1 \bar{f}_1+\cdots+\la_t \bar{f}_t=0.$$
For each $G \in S_i(M)$, choosing an $F \in \phi^{-1}(G)$, we have
\begin{equation}
\begin{split}
\la_1 (\bar{f}_1)([F])+\cdots+\la_t (\bar{f}_t)([F]) & =
\la_1 f_1([G]) \sgn([F],[G])+\cdots+\la_t f_t([G]) \sgn([F],[G])\\
& = \sgn([F],[G]) \left(\la_1 f_1([G])+\cdots+ \la_t f_t([G])\right)\\
& =0.
\end{split}
\end{equation}
So we have
$$ \la_1 f_1+ \cdots+\la_t f_t =0$$
which implies that $\la_1=\cdots=\la_t=0$ as $f_1,\ldots,f_t$ are linearly independent.
So $\bar{f}_1,\ldots,\bar{f}_t$ is a basis of $V$.
The result follows.
\end{proof}

\begin{exm}
Let $K,M$ be the complexes in Fig. \ref{KL}.
It is easy to see
$$ \beta_0(K)=\beta_0(M)=0, \beta_1(K)=\beta_1(M)=1, \beta_2(K)=\beta_2(M)=0.$$

Let $G,\bar{G}$ be the graphs in Fig. \ref{graphcov}.
It is seen that $\bar{G}$ is a $2$-fold covering of $G$, and
$$ \beta_0(G)=\beta_0(\bar{G})=0, \beta_1(G) = 2 < \beta(\bar{G})=3.$$
So it will be interesting to characterize the equality case of Theorem \ref{dimhom}.

\begin{figure}[h]
\centering
\includegraphics[scale=.75]{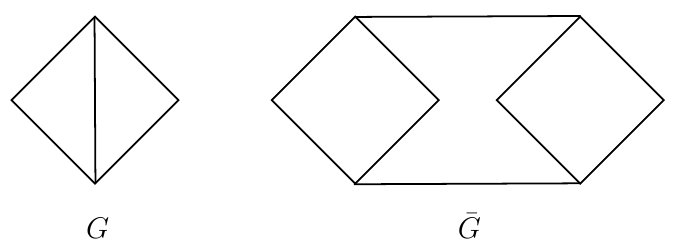}~~~
\caption{\small A graph $G$ and its $2$-fold covering $\bar{G}$}\label{graphcov}
\end{figure}

\end{exm}

\end{document}